\documentclass[12pt,a4paper,oneside,onecolumn]{article}

\usepackage{libertine}
\usepackage{courier}
\usepackage[english]{babel}

\usepackage{amsthm,amsmath,amssymb,mathtools}
\usepackage{thmtools}

\numberwithin{equation}{section} 

\usepackage{titlesec}
\usepackage{float}
\usepackage{graphicx}
\usepackage{comment}
\usepackage{nicefrac}
\usepackage{datetime}
\usepackage{xcolor}
\usepackage[shortlabels]{enumitem}
\usepackage{cancel}
\usepackage{titling}
\usepackage[normalem]{ulem} 

\usepackage{hyperref}
\usepackage{bookmark}
\usepackage{cleveref} 
\crefname{prop}{proposition}{propositions}
\Crefname{prop}{Proposition}{Propositions}
\crefname{cor}{corollary}{corollaries}
\Crefname{cor}{Corollary}{Corollaries}

\theoremstyle{plain}
\newtheorem{theorem}{Theorem}[section]
\newtheorem{lemma}[theorem]{Lemma}
\newtheorem{cor}[theorem]{Corollary}
\newtheorem{conj}[theorem]{Conjecture}
\newtheorem{prop}[theorem]{Proposition}
\theoremstyle{definition}

\newtheorem{question}[theorem]{Question}
\theoremstyle{remark}
\newtheorem*{remark*}{Remark}

\newcommand{\R}{\mathbb{R}}
\newcommand{\Id}{\mathcal{I}^2\left( \left[0,2\pi\right],\mathbb{R}^d\right)}

\newcommand{\Ind}{\mathcal{I}^n\left( \left[0,2\pi\right],\mathbb{R}^d\right)}
\newcommand{\In}{\mathcal{I}^{n}\left(\left[0,2\pi\right],\mathbb{R}\right)}

\DeclareMathOperator{\diff}{\operatorname{Diff}_{H^n}\big(\left[0,2\pi\right]\big)}

\DeclareMathOperator{\len}{\mathrm{len}}
\newcommand{\dist}{\mathrm{dist}_G}

\title{On the Metric Completion of Immersed Open Curves}
\author{Ronny Gelman\thanks{Department of Mathematics, University of Toronto} \and Cy Maor\thanks{Einstein Institute of Mathematics, Hebrew University of Jerusalem}}
\date{}

\begin{document}

\maketitle

\vspace{-1.2cm}
\begin{abstract}

The completeness properties of spaces of immersed curves endowed with Sobolev-type reparametrization-invariant Riemannian metrics have been studied extensively in recent years. 
Whereas constant-coefficient metrics of large enough order on the space of closed curves are known to be complete, this is not the case for open curves.
In this work, we characterize the metric completion of the space of real-valued immersed open curves with respect to these metrics. 
We show that the set of additional limit points in the completion is homeomorphic to $H^n$-diffeomorphisms of a closed interval. 
This result answers a question of the structure of the completion, posed in \cite{BMM23}, in the negative, and suggests a more refined conjecture on the structure in the case of multi-dimensional ambient space.

\end{abstract}
\section{Introduction}
\label{chap:intro}
\subsection{background}

Reparametrization-invariant Sobolev-type Riemannian metrics on spaces of immersed curves have attracted significant interest throughout the years.
From a theoretical point of view, these are the natural generalizations of right-invariant metrics on $\operatorname{Diff}(S^1)$, which are related to many equations in hydrodynamics \cite{bauer2014overview}.
From an applications point of view, these spaces play a significant role in shape analysis, where these metrics are used to compare objects whose shapes are described by these immersions, which can be the outline of a planar shape (closed curves) or time trajectories (open curves) \cite{Bauer_2014,srivastava2016functional,su2014statistical,su2018comparing,younes2010shapes}.

Since these are infinite dimensional Riemannian manifolds, many ``elementary'' properties of Riemannian metrics, like the non-degeneracy of their geodesic distance, the existence of geodesics and the completeness of the manifolds, are highly non-trivial, and strongly depends on the order of the metric --- the number of derivatives of the tangent vectors it involves.
Generally speaking, the higher the order of the metric the better the space ``behaves''.
This was studies in a series of works \cite{bauer2024completeness,BMM23,bruveris2015completenesspropertiessobolevmetrics,bruveris2014geodesic,bruveris2017completeness,michor2006riemannian}, mostly concerned with close curves.\footnote{Other extensions of this line of works are for embedded curves \cite{dohrer2025complete}, and for spaces of immersed closed surfaces \cite{bauer2025completeness}}
As shown in \cite{bauer2024completeness}, the exact threshold for completeness is $H^{3/2}$, that is, for any $s>3/2$, the geodesic equations of the Sobolev metric of order $s$ is globally well-posed.
This is proven by showing that the space of Sobolev immersions of order $H^s$ are metrically-complete, and using the only part of the Hopf--Rinow theorem that holds in infinite dimensions.

Open curves, on the other hand, exhibit a different behavior:
Spaces of immersed open curves endowed with constant coefficient Sobolev-type reparametrization-invariant metrics of any order have been shown to be metrically incomplete: examples in \cite{bauer2019relaxed,BMM23} construct paths of immersions, with diminishing curve length, that leave the space in finite time.

The analysis in \cite{BMM23} does, however, establish that Cauchy sequences of open-curve immersions whose lengths are bounded away from zero, converge. 
This implies that all possible instances of metric incompleteness can be traced to sequences of curves with vanishing curve length; moreover, in \cite{BMM23} it was shown that various such sequences represented the same point in the completion space, which led to the following question:
\begin{question}[\cite{BMM23}]
Let $G$ be a constant coefficient Sobolev-type reparametrization-invariant Riemannian metric of order $n\ge 2$ on the space of immersed open curves of regularity $H^n$.
Does the metric completion of this space consist of a single additional point representing the limit of all vanishing-lenght Cauchy sequences?
\end{question}

In this paper, we focus on the simplest setting for immersions of open curves --- namely, real-valued immersions $[0,2\pi]\to \R$. 
We endow this space with  constant coefficient Sobolev-type reparametrization-invariant Riemannian metrics of order $n\ge 2$, with a non-zero curvature term. 
The choice to study real-valued immersions is a natural one, as the only existing examples of diverging Cauchy sequences in \cite{bauer2019relaxed,BMM23} are constructed using straight-line curves.

In these settings, we answer the above-mentioned question in the negative by proving the existence of a family of distinct limit points. 
Moreover, we identify the structure of the set of additional limit points and prove that it is homeomorphic to $\diff$ endowed with its natural topology. 
In view of this results, we suggest an updated conjecture regarding the metric completion in the more general case. 

Our main result, the updated conjecture, and a brief overview of the main proof are discussed in the following subsection, followed by a preliminary section establishing terminology and tools. The main results are proved in \Cref{sec: results}. 

\subsection{Main result}
\label{subsec: results presentation}

To state the results, we briefly describe our objects of interest. Consider the space of  Euclidean-valued Sobolev maps $H^n\left((0,2\pi),\R^d\right)$, where $n\ge2$ and $d\ge1$.
We define the space of $H^n$-regular immersed open curves as 
\begin{equation}\label{eq:Ind}
\Ind\coloneqq \bigg\{c\in H^n\left(\left(0,2\pi\right),\R^d\right) ~:~ \forall \theta\in [0,2\pi], |c'(\theta)|>0\bigg\},
\end{equation}
which is well defined by Sobolev embedding $H^n(0,2\pi)\subset C^1([0,2\pi])$, which also shows that this is an open set of $H^n\left(\left(0,2\pi\right),\R^d\right)$. 
Thus, it inherits a Hilbert-manifold structure of the Hilbert space $H^n\left(\left(0,2\pi\right),\R^d\right)$.\
We endow the manifold with a constant coefficient, reparametrization-invariant Sobolev-type Riemannian metric $G$ (see \eqref{eq: metric} for the exact definition).
Denote the induced geodesic distance by $\dist$. As previously mentioned, \cite{bauer2019relaxed,BMM23} demonstrate the metric incompleteness of $\Ind$ with respect to $G$.
We denote by $\overline{\Ind}$ its metric completion, and refer to the set of additional limit points by
\[\overline{\Ind}\setminus\Ind.\]

We also denote by $\diff$ the group of $H^n$-regular diffeomorphisms of $[0,2\pi]$, and consider it with its natural topology induced by $\|\cdot\|_{H^n}$.

Our main result regards the completion in the case of $d=1$: 
\begin{theorem}
\label{thm: main}
      Let $n\ge2$, and let $G$ be a constant coefficient, reparametrization-invariant Sobolev-type Riemannian metric of order $n$, with a non-trivial $\dot{H}^2$ term (see \eqref{eq: metric}). Then the mapping
      \[\diff\rightarrow\overline\In\setminus\In,\] 
     taking a diffeomorphism $\varphi\in\diff$ to the limit point of the vanishing-length sequence of immersions $\left(\frac{1}{n} \varphi\right)_{n=1}^\infty$ in $\overline{\In}$, is a homeomorphism. As a result, we can write the metric completion as 
     \[\In\sqcup \diff.\]
     Furthermore, the inverse of the mapping above is Lipschitz continuous when $\diff$ is endowed with the distance induced by its natural embedding into $\In$. 
\end{theorem}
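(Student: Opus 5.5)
Here $\varphi$ is viewed as an immersion in $\In$, and we may assume $\varphi'>0$. The metric is
\[
G_c(h,k)=\sum_{j} a_j\int_0^{2\pi} \langle D_s^j h, D_s^j k\rangle\, ds, \qquad D_s=\tfrac{1}{|c'|}\partial_\theta,\quad ds=|c'|\,d\theta,
\]
with $a_2>0$. The plan is to find the right scaling of the sequence and then work in arc-length coordinates, where the metric decouples.

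\textbf{Step 1 (the scaling path).} Along the path $t\mapsto t\varphi$ the velocity is $\varphi$. Since $D_s\varphi=1$ and $D_s^j\varphi=0$ for $j\ge2$, only the $j=0,1$ terms survive, giving
\[
G_{t\varphi}(\varphi,\varphi)=a_0 t\int\varphi^2\,d\theta + a_1 t\int\varphi'\,d\theta = O(t).
\]
Hence the length of the path from $t_1\varphi$ to $t_2\varphi$ is $O(|\sqrt{t_2}-\sqrt{t_1}|)$. So $(\tfrac1k\varphi)_k$ is Cauchy, and the map $\Phi:\diff\to\overline{\In}$ is well defined. Reparametrization invariance gives $\dist(c\circ\psi,\tilde c\circ\psi)=\dist(c,\tilde c)$, which we use repeatedly.

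\textbf{Step 2 (decomposition).} Write $c=\ell\,(\gamma\circ\varphi_c)$ with $\ell=\len(c)$ and $\varphi_c(\theta)=2\pi s(\theta)/\ell$ the normalized arc-length. For $d=1$ the shape $\gamma$ is trivial, so $c\mapsto(\ell,\varphi_c)\in(0,\infty)\times\diff$ is a bijection, up to orientation and translation. The key estimate to prove is a lower bound of the form
\[
G_c(h,h)\ \ge\ \alpha\,\ell^{-3}\,\big\|\partial_t\varphi_c\big\|^2_{\dot H^2\text{-type}},
\]
obtained from the $a_2$ term. Differentiating $s'=|c'|$ gives $D_s h$ in terms of $\partial_t \log|c'|$, and $D_s^2h$ then produces $\partial_\theta\partial_t\log\varphi_c'$ with a weight $\ell^{-3}$ after the change of variables. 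It is more natural to use the log-derivative coordinate $q=\log\varphi_c'$.

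Multiplying this by $\ell$ suggests the comparison: the $\ell$-motion costs about $d\sqrt\ell$, while the $\varphi$-motion costs about $\ell^{-3/2}\|\dot q\|$, which blows up as $\ell\to0$. Hence along any finite-length path reaching small $\ell$, the profile $\varphi_c$ must stabilize.

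\textbf{Step 3 (injectivity, Lipschitz inverse, surjectivity).} From the Step 2 bound we get
\[
\|q_1-q_2\|\lesssim \ell^{3/2}\dist + \cdots .
\]
This is not quite the right form, so the plan instead is to show directly that $\Psi(c)=\varphi_c$ is Lipschitz from $(\In,\dist)$ to $\diff$, with the embedding distance, on the set where $\ell\le1$. I would argue this via the $a_1$ and $a_2$ terms controlling $\partial_t$ of $\varphi_c$ in $H^1$ and $H^2$ norms, together with an interpolation bootstrap up to $H^n$ using the higher $a_j$ terms.

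Given such a Lipschitz bound, $\Psi$ extends to the completion. Since $\Psi(\tfrac1k\varphi)=\varphi$, the extension is a left inverse of $\Phi$, which gives injectivity and the Lipschitz inverse. Every vanishing-length Cauchy sequence $c_k$ has $\varphi_{c_k}$ converging to some $\varphi$. Moreover $\dist(c_k,\ell_k\varphi_{c_k}/2\pi)\to0$ by the scaling estimate, and continuity of $\Phi$ then gives surjectivity. Since \cite{BMM23} shows that Cauchy sequences with length bounded below converge in $\In$, every new point of the completion has vanishing length.

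Continuity of $\Phi$ follows from
\[
\dist\big(\tfrac1k\varphi,\tfrac1k\psi\big)\le \dist\big(\tfrac1k\varphi,\varphi\big)+\dist(\varphi,\psi)+\dist\big(\psi,\tfrac1k\psi\big),
\]
which is too crude. Instead I would use the straight line $t\mapsto \tfrac1k((1-t)\varphi+t\psi)$, whose $G$-energy is bounded by $C\|\varphi-\psi\|_{H^n}$ uniformly in $k$ for small $1/k$, because the $D_s^j$ terms scale as nonnegative powers of $k^{-1}$ times $H^n$ norms, as long as $\varphi'$ stays bounded below.

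\textbf{Main obstacle.} The hard part is the Lipschitz control of $\Psi$ in the full $H^n$ topology uniformly as $\ell\to0$. The weights $\ell^{-(2j-1)}$ help, but lower-order terms also mix in $\partial_t\ell$. I would first handle the $a_2$ term to get $H^1$ control of $\log\varphi_c'$, which is where the non-trivial $\dot H^2$ hypothesis enters. Then I would treat the higher $j$ by an induction using the Gagliardo–Nirenberg inequality with $\varphi'$ bounded above and below.
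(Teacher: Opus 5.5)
Your overall architecture is a reasonable repackaging of the paper's argument: a left inverse $\Psi(c)=\varphi_c$, uniformly Lipschitz near the new points, extended to the completion. However, two steps do not go through as written.

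\textbf{The continuity argument for $\Phi$ is false.} Along $t\mapsto\frac1k\chi_t$, with $\chi_t=(1-t)\varphi+t\psi$, the velocity is $h=\frac1k(\psi-\varphi)$. The scaling $\partial_s\mapsto k\,\partial_s$, $ds\mapsto k^{-1}ds$ gives
\[
\|\partial_s^j h\|^2_{L^2(ds)}=k^{2j-3}\,\|\partial_s^j(\psi-\varphi)\|^2_{L^2(ds_{\chi_t})},
\]
which is a positive power of $k$ for every $j\ge2$. Since $\psi-\varphi$ vanishes at $0$ and $2\pi$, it is not affine in the arc-length of $\chi_t$ unless $\psi=\varphi$. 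So the $a_2$ term alone makes the length of this path grow like $\sqrt k$. This is exactly the blow-up you use in Step 2 to argue that the profile freezes, so the two claims contradict each other.

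The triangle inequality you discarded does work if the intermediate scale is a small $\lambda$ rather than $1$. By \Cref{lem: shrink dist}, $\dist(\Phi(\varphi_m),\lambda\varphi_m)\le C\sqrt\lambda$ uniformly in $m$, hence
\[
\dist(\Phi(\varphi_m),\Phi(\varphi))\le 2C\sqrt\lambda+\dist(\lambda\varphi_m,\lambda\varphi).
\]
Choose $\lambda$ first and then $m$, using that $\|\lambda\varphi_m-\lambda\varphi\|_{H^n}\to0$ forces $\dist(\lambda\varphi_m,\lambda\varphi)\to0$ by \Cref{lemm: topo equivalence}.

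Two smaller slips. Along $t\varphi$ one has $\partial_s\varphi=1/t$, not $1$, so the $a_1$ term is $2\pi a_1/t$; this is what actually produces your $\sqrt t$ bound. In the surjectivity step, the translation $c_k(0)$ must be removed with \Cref{lemma: translation dist} after shrinking, not by scaling alone.

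\textbf{The core estimate is not proved.} You leave it as the main obstacle, and the sketch targets the wrong terms. Write a path as $\gamma=r+\ell\varphi$ with $\varphi(t)\in\diff$. In orders $i\ge2$ there is no mixing with $\ell_t$: exactly $\|\partial_s^i\gamma_t\|^2_{L^2(ds)}=\ell^{-(2i-3)}\|\partial_s^i\varphi_t\|^2_{L^2(ds)}$, with the right side computed on $\varphi(t)$ (\Cref{lem: path-breakdown}). In particular the $a_2$ weight is $\ell^{-1}$, not $\ell^{-3}$. Mixing occurs only in orders $0$ and $1$, so the $a_0,a_1$ terms should be discarded, not used. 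The remaining pieces are then routine:
\begin{itemize}
\item The $L^2$ part of $\varphi_t$ comes from the $a_2$ term by \Cref{lemm: poincare est}, since $\varphi_t(0)=\varphi_t(2\pi)=0$.
\item The intermediate orders come from \Cref{lemm: norm est} at the fixed length $2\pi$.
\item If the target is measured by the embedding distance, you need no bounds on $\varphi'$ and no Gagliardo--Nirenberg in $\theta$.
\end{itemize}

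The essential missing idea is different. A pointwise bound valid where $\ell\le1$ does not give a Lipschitz bound with respect to $\dist$. That distance is an infimum over all paths in $\In$, including paths through long curves where $\ell^{-(2i-3)}$ is small. You must show that near-minimizing paths between short curves stay short.

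The paper gets this from the Lipschitz bound on $c\mapsto\ell_c^{3/2}$ over metric balls (\Cref{lemm: ellc lipschitz}). It yields a universal $r_0$ such that paths of length $<r_0$ starting at $\frac1m\varphi_0$, for $m$ large, stay among curves of length at most $1$. This gives $\dist(\iota(\varphi),\iota(\psi))\le\tilde C\,\dist(\mathcal F(\varphi),\mathcal F(\psi))$ whenever the right side is $<r_0$ (\Cref{lem: Constant dist inequality}). The same argument, applied directly to short curves, gives the uniform-radius local Lipschitz bound your extension of $\Psi$ needs. The global Lipschitz constant then follows from $\dist(\iota(\varphi),\mathcal F(\varphi))\le C$.
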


We suspect that the same result extends to $\Ind$, for any $d\ge 1$, possibly even when endowed with metrics that do not explicitly include a second-order term:

\begin{conj}
    \label{conj: main update}
    Let $n\ge2$, $d\ge1$, and $G$ a Riemannian metric as in \eqref{eq: metric} with, possibly, $a_2=0$. Then the metric completion of $\left(\Ind,\dist\right)$ consist of 
    \[\Ind\sqcup \diff.\] 
    Consequently, the metric completion of the quotient space, 
    \[\Ind\slash\diff,\]
    consists of a single additional point, representing the limit of all asymptotically vanishing sequences. 
\end{conj}

We also note that \Cref{thm: main} seems to be one of the few explicit characterizations \cite{clarke2013ricci,darvas2017mabuchi,guedj2014metric,khesin2004flow} of a non-trivial metric completion of an infinite-dimensional manifold endowed with a natural Riemannian structure.

The paper is organized as follows:
in \Cref{chap: Preliminaries} we discuss the space of immersions of open curves, the Riemannian metrics we consider, and some of their basic properties.
In \Cref{subsec: estimates} we establish some estimates in this space, culminating in \Cref{lem: Constant dist inequality}, which serves as the primary tool for the subsequent arguments. 
This lemma demonstrates that the geodesic distance between a pair of arbitrarily short immersions is controlled from below by the geodesic distance between the associated pair of diffeomorphisms in $\diff$. 
The proof of \Cref{thm: main} is then concluded in \Cref{subsec: results}.

\paragraph{Acknowledgements} This work is based on, and extends parts of the Master's thesis of RG at the Hebrew University of Jerusalem. 
CM was partially supported by ISF grant 2304/24 and BSF grant 2022076.
AI was not used in the derivation of the results of this paper, nor in its writing.
\section{Preliminaries}
\label{chap: Preliminaries}
In this section we define spaces of Euclidean-valued immersed open curves and present the tools we will use throughout the analysis that follows. 

Consider the space $\Ind$ as defined in \eqref{eq:Ind}.
As mentioned earlier, it is an open subset of $H^n((0,2\pi),\R^d)$, and thus 
\[T_c\Ind\cong H^n((0,2\pi),\R^d),\]
where we view a tangent vector $h\in T_c\Ind$ as a vector field along $c$. 

We define reparametrization-invariant Sobolev-type Riemannian metrics with constant coefficients on $\Ind$ by
\begin{equation}
\label{eq: metric}
 G_c(h,k)  \coloneqq \sum_{i=1} ^n  a_i \intop_0 ^{2\pi} \langle \partial_s ^i h,\partial_s ^i k\rangle_{\mathbb{R}^d}\ ds,\quad a_i\ge0, \ a_0,a_2,a_n>0,
\end{equation}
where $ \partial_s h=\frac{1}{|c'|}\partial_\theta h \ $  and $ ds=|c'|d\theta$.

For a fixed base curve $c$, and any $h\in T_c\Ind$, we denote $\|\partial_s^i h\|^2_{L^2(ds)}=\int_0^{2\pi}\langle \partial_s ^i h,\partial_s ^i k\rangle_{\mathbb{R}^d}ds$.
The length of a piecewise $C^1$-curve $\gamma:[a,b]\rightarrow M$, is defined as 
    \begin{equation}
    \label{eq: len definition}
    \len(\gamma)\coloneqq\intop_a^b\sqrt{G_{\gamma(t)}\big{(}\gamma_t(t),\gamma_t(t)\big{)}}dt.
   \end{equation}
We then denote the induced geodesic distance by 
    \begin{equation}
    \dist\left(c_1,c_2\right)\coloneqq\inf\len(\gamma),
    \end{equation}
where the infimum is taken over all piecewise $C^1$ paths between $c_1$ and $c_2$.

We denote by $\diff$ the set of $H^n$-diffeomorphisms mapping $[0,2\pi]$ onto itself.
It is a half-Lie group: a smooth manifold which is a topological group, in which right translations are smooth \cite{bauer2025completeness}. 
Moreover, $\diff$ acts smoothly from the right on $\Ind$ by reparametrization.

The Riemannian metrics defined in \Cref{eq: metric} are reparametrization invariant. By this we mean that, 
\[G_c\left (h,k\right)=G_{c\circ\varphi}\left(h\circ\varphi,k\circ\varphi\right)
\]
for any $h,k\in T_c\Ind$ and any $\varphi\in\diff$. This immediately implies that the induced distance is reparametrization invariant in the sense that
\begin{equation}
    \label{eq: rep. invariance}
    \dist\left(c_1,c_2\right)=\dist\left(c_1\circ\varphi,c_2\circ\varphi\right)
\end{equation}
for any pair $c_1,c_2\in\Ind$ and any $\varphi\in\diff$.

We note that for any $c\in\Ind$, the metric $G_c$ as in \eqref{eq: metric} is equivalent to the non-invariant metric induces on $\Ind$ as a subset of the Hilbert space $H^n((0,2\pi),\R^d)$.
As such, $G$ is a strong Riemannian metric (i.e., $G_c$ induces the natural topology on the tangent space for any $c\in \Ind$), and thus its geodesic distance induces the manifold topology:
\begin{lemma}[{\cite[VII, prop.~6.1]{lang2012fundamentals}}]
    \label{lemm: topo equivalence}
    Given a Riemannian manifold $(X,g)$, where $g$ is a strong Riemannian metric,  the topology induced by the geodesic distance $\text{dist}_g$ is the underlying topology obtained from the manifold structure.
\end{lemma}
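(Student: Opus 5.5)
This is the standard fact that a strong Riemannian metric on a Hilbert manifold induces the manifold topology. The plan is to compare $\text{dist}_g$ locally with the norm of a chart, in both directions, using the continuity of $g$ in the chart.

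\textbf{Local uniform equivalence.} Fix $p\in X$ and a chart $\phi:U\to E$ into a Hilbert space $E$, with $\phi(p)=0$. In this chart, $g$ is given by a continuous map $x\mapsto g_x$ into bounded symmetric bilinear forms on $E$. Strongness means $g_0$ is equivalent to the Hilbert norm, i.e.\ $C^{-1}\|v\|^2\le g_0(v,v)\le C\|v\|^2$. Since $x\mapsto g_x$ is continuous in operator norm, this perturbs to a two-sided bound
\[
(2C)^{-1}\|v\|^2\le g_x(v,v)\le 2C\|v\|^2 \qquad\text{for all } x\in B_r(0),
\]
for some $r>0$. This is the step where strongness is essential: for weak metrics the lower bound fails.

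\textbf{Upper bound: metric balls are large.} For $q$ with $\phi(q)\in B_r(0)$, take the straight segment in the chart from $0$ to $\phi(q)$. Its $g$-length is at most $\sqrt{2C}\,\|\phi(q)\|$. Hence $\text{dist}_g(p,q)\le \sqrt{2C}\,\|\phi(q)\|$, so every $\text{dist}_g$-ball around $p$ contains a chart neighbourhood of $p$.

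\textbf{Lower bound: chart neighbourhoods contain metric balls.} This is the main obstacle, because competing paths may leave the chart. Let $\gamma$ be a piecewise $C^1$ path from $p$ to a point $q$ with $\|\phi(q)\|\ge\rho$, where $\rho<r$, or to any point outside $\phi^{-1}(B_\rho(0))$. Let $t^*$ be the first time with $\|\phi(\gamma(t^*))\|=\rho$. On $[0,t^*]$ the path stays in $B_\rho(0)$, so by the local bound
\[
\len(\gamma)\ \ge\ \int_0^{t^*}(2C)^{-1/2}\,\bigl\|(\phi\circ\gamma)'\bigr\|\,dt\ \ge\ (2C)^{-1/2}\rho .
\]
Therefore the ball of $\text{dist}_g$-radius $(2C)^{-1/2}\rho$ about $p$ lies inside $\phi^{-1}(B_\rho(0))$. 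The same argument shows that $\text{dist}_g(p,q)>0$ for $q\ne p$ (using the Hausdorff property of $X$ to place $q$ outside a small chart ball), so $\text{dist}_g$ is a genuine metric.

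\textbf{Conclusion.} The two bounds show that around every point the $\text{dist}_g$-balls and the chart neighbourhoods form mutually cofinal neighbourhood bases. Hence the two topologies coincide.
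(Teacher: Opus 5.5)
Your overall strategy is the standard one. It is the argument behind the result in Lang that the paper cites without proof. The local two-sided bound, the upper bound via chart segments, and the final comparison of neighbourhood bases are all fine.

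The gap is in the lower bound, at the sentence ``let $t^*$ be the first time with $\|\phi(\gamma(t^*))\|=\rho$''. There you take for granted that a path from $p$ to a point outside $\phi^{-1}(B_\rho(0))$ must meet $\phi^{-1}$ of the sphere of radius $\rho$, and this is not automatic. In finite dimensions one would argue that $\phi^{-1}(\overline{B_\rho(0)})$ is compact, hence closed in the Hausdorff space $X$. In a Hilbert manifold closed balls are not compact, so that argument is unavailable. A priori the path could leave through a point in the closure of $\phi^{-1}(B_\rho(0))$ that lies outside $U$, without ever touching the sphere.

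This step carries real content, as the line with two origins $p,q$ and the flat metric shows. A path from $p$ out to $\varepsilon$ and back to $q$ has length $2\varepsilon$, so $\text{dist}_g(p,q)=0$. Yet your lower bound would give $\text{dist}_g(p,q)\ge(2C)^{-1/2}\rho$, since $q\notin U$. So the Hausdorff property must enter exactly here. It is not needed where you invoked it: once the lower bound holds, $q\neq p$ forces either $q\notin U$ or $\phi(q)\neq 0$. In both cases $q$ lies outside some $\phi^{-1}(B_\rho(0))$.

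The repair uses your own estimate.
\begin{enumerate}[(i)]
\item Set $t^*=\sup\{t:\gamma([0,t])\subset\phi^{-1}(B_\rho(0))\}$. Then $\gamma([0,t^*))\subset\phi^{-1}(B_\rho(0))$. Since this set is open and $\gamma$ ends outside it, $\gamma(t^*)\notin\phi^{-1}(B_\rho(0))$.
\item On $[0,t^*)$ the bound $g_x(v,v)\ge(2C)^{-1}\|v\|^2$ gives $\int_0^{t^*}\|(\phi\circ\gamma)'\|\,dt\le\sqrt{2C}\,\len(\gamma)<\infty$. Hence $\phi(\gamma(t))$ is Cauchy as $t\uparrow t^*$ and converges to some $y\in\overline{B_\rho(0)}\subset B_r(0)$.
\item Continuity of $\phi^{-1}$ gives $\gamma(t)\to\phi^{-1}(y)$, and continuity of $\gamma$ gives $\gamma(t)\to\gamma(t^*)$. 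By the Hausdorff property, $\gamma(t^*)=\phi^{-1}(y)\in U$. Since $\gamma(t^*)\notin\phi^{-1}(B_\rho(0))$, this forces $\|y\|=\rho$.
\item Letting $t\uparrow t^*$ in $\len(\gamma)\ge(2C)^{-1/2}\|\phi(\gamma(t))\|$ yields $\len(\gamma)\ge(2C)^{-1/2}\rho$.
\end{enumerate}
With this inserted, and $r$ chosen so that $B_r(0)\subset\phi(U)$, your proof is complete.
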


For an immersion $c\in\Ind$, we refer to the curve length of $c$,  as 
\begin{equation}
    \label{eq: ellc def}
\ell_c\coloneqq\int_0^{2\pi}ds.
\end{equation}
 By reparametrization, it can be verified that  $\|\partial_s^i h\|_{L^2(ds)}=\frac{1}{\ell_c^{(2n-3)\slash2}}\|h\|_{\dot{H}^i(d\theta)}$. Furthermore, we note the following Sobolev-interpolation estimate: 
\begin{lemma}[{\cite[Lemma~4.1]{BMM23}}]
    \label{lemm: norm est}
    For $n\ge2$, $c\in\Ind$,  $h\in T_c\Ind$, and any $0\le k\le n$,  there exists $C=C(k,n)$ such that 
    \begin{equation}
        \ell_c^{2k}\|\partial_s^k h\|^2_{L^2(ds)}\le C\left(\| h\|^2_{L^2(ds)}+\ell_c^{2n}\|\partial_s^n h\|^2_{L^2(ds)}\right).
    \end{equation}
\end{lemma}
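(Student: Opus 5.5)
The idea is to use reparametrization invariance to reduce to a constant-speed curve. After that, both sides of the inequality scale identically in $\ell_c$, and the claim becomes the classical interpolation inequality on the fixed interval $[0,2\pi]$.

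\emph{Step 1: reparametrize to constant speed.} Set $s(\theta)=\int_0^\theta |c'(\sigma)|\,d\sigma$. Since $c\in H^n$ with $n\ge 2$, we have $|c'|\in H^{n-1}$ and $|c'|>0$, so $s\in H^n$ with $s'>0$. Let
\[\varphi(\theta)\coloneqq s^{-1}\!\left(\tfrac{\ell_c}{2\pi}\theta\right).\]
By the inverse function theorem in Sobolev classes, $\varphi\in\diff$. Then $\tilde c=c\circ\varphi$ has constant speed $|\tilde c'|\equiv \ell_c/2\pi$, and $\ell_{\tilde c}=\ell_c$.

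The quantities $\|\partial_s^k h\|_{L^2(ds)}$ are invariant under $(c,h)\mapsto(c\circ\varphi,h\circ\varphi)$; this is the same computation that gives the reparametrization invariance of $G$. So it suffices to prove the inequality for $\tilde c$ and $u\coloneqq h\circ\varphi\in H^n$.

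\emph{Step 2: scaling.} For $\tilde c$ we have $\partial_s=\frac{2\pi}{\ell_c}\partial_\theta$ and $ds=\frac{\ell_c}{2\pi}d\theta$. Hence for every $0\le j\le n$,
\[\ell_c^{2j}\|\partial_s^j u\|^2_{L^2(ds)}=(2\pi)^{2j}\,\tfrac{\ell_c}{2\pi}\,\|u^{(j)}\|^2_{L^2(d\theta)}.\]
The common factor $\ell_c/2\pi$ cancels from both sides. The claim is therefore equivalent to the scale-free inequality
\[\|u^{(k)}\|_{L^2(0,2\pi)}^2\le C(k,n)\left(\|u\|^2_{L^2(0,2\pi)}+\|u^{(n)}\|^2_{L^2(0,2\pi)}\right),\qquad u\in H^n((0,2\pi),\R^d),\]
where $C(k,n)$ absorbs the factors of $2\pi$. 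The inequality holds componentwise, so we may take $d=1$.

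\emph{Step 3: the interpolation inequality.} Write $u=p+r$, where $p$ is the Taylor polynomial of $u$ at $0$ of degree $<n$. The remainder is
\[r(\theta)=\tfrac{1}{(n-1)!}\int_0^\theta(\theta-\sigma)^{n-1}u^{(n)}(\sigma)\,d\sigma.\]
By Cauchy--Schwarz, $\|r^{(j)}\|_{L^2}\le C\|u^{(n)}\|_{L^2}$ for all $j\le n$. This gives $\|p\|_{L^2}\le\|u\|_{L^2}+C\|u^{(n)}\|_{L^2}$.

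On the finite-dimensional space of polynomials of degree $<n$, all norms are equivalent, so $\|p^{(k)}\|_{L^2}\le C\|p\|_{L^2}$. Combining,
\[\|u^{(k)}\|_{L^2}\le\|p^{(k)}\|_{L^2}+\|r^{(k)}\|_{L^2}\le C\left(\|u\|_{L^2}+\|u^{(n)}\|_{L^2}\right).\]
Squaring yields the inequality from Step 2.

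\emph{Main obstacle.} The only point needing care is Step 1: the constant-speed reparametrization must lie in $\diff$, so that the invariance can be applied within the $H^n$ category. This follows because $s$ is an $H^n$ function with derivative bounded below, and for $n\ge 2$ the algebra property of $H^{n-1}$ on an interval gives $\varphi\in H^n$. The rest is elementary and tracks only constants depending on $k$ and $n$.
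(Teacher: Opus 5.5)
Your proof is correct. The paper does not prove this lemma itself; it is quoted from \cite{BMM23}. Your route is the standard one: use reparametrization invariance to pass to a constant-speed curve, observe that after rescaling both sides carry the same factor $\ell_c/2\pi$, and then apply the classical interpolation inequality $\|u^{(k)}\|_{L^2}\le C(\|u\|_{L^2}+\|u^{(n)}\|_{L^2})$ on $[0,2\pi]$, which your Taylor-polynomial decomposition establishes cleanly. It is also the same reparametrization-plus-scaling device the paper uses in the proof of \Cref{lemm: poincare est}.

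One minor remark: the point you flag as the main obstacle is not actually needed. The identities $u^{(j)}=(\ell_c/2\pi)^j(\partial_s^j h)\circ\varphi$ for $0\le j\le n$ only require $\varphi$ to be a $C^1$ diffeomorphism with $\varphi'>0$, and they already show directly that $u\in H^n$. So the fact that $\varphi\in\diff$ is true but superfluous here.
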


We note that for closed curves it is often more standard to consider the metric 
\begin{equation}
    \label{eq: alternative metric def}
    \tilde{G}_c\left(h,k\right) \coloneqq  \intop_0 ^{2\pi} \langle  h, k\rangle_{\mathbb{R}^d}+ \langle \partial_s^n h,\partial_s^n k\rangle_{\mathbb{R}^d} ds.
\end{equation}
Although $G_c$ and $\tilde{G}_c$ induce equivalent norms on any $T_c\Ind$ for a fixed $c\in\Ind$, the constants involved depend on $\ell_c$, and thus do not necessarily induce equivalent distance functions.
In contrast, for closed curves \Cref{lemm: norm est} can be improved \cite{BMM23,bruveris2014geodesic}, and all the constant coefficients metrics of the same order are equivalent, with constants independent of $c$.

Another important distinction between spaces of closed curves (or spaces with length-weighted coefficient metrics) and spaces of open curves is the boundedness of the map $c\mapsto\ell_c$. It is both bounded and bounded away from zero on metric balls in the former, while in the latter, metric balls may contain curves of arbitrarily short curve length.
This prevents the use of Sobolev interpolations to obtain uniform estimates on metric balls in the case of open curves. 

Nevertheless, the following were established in \cite{BMM23}:
\begin{lemma}[{\cite[Lemma~6.6]{BMM23}}]
    \label{lemm: ellc lipschitz}
    Let $n\ge2$, $d\ge1$, and $G$ be a metric of form \eqref{eq: metric} with , possibly, $a_2=0$. Then the map $c\mapsto\ell_c^{\nicefrac{3}{2}}$ is Lipschitz continuous on metric balls of $\left(\Ind,\dist\right)$. Moreover, the Lipschitz constant in $B_r(c_0)$ depends only on $r$ and $\ell_{c_0}$, and on a given metric ball $B_r(c_0)\subset\Ind$, the Lipchitz constant of $c\mapsto\ell_c^{3/2}$ can be bounded by  
    \[
     L=C(1+r)(1+2r)e^{2Cr}\left(1+\ell_{c_0}^{3/2}\right),
    \]
where $C=C(a_0,a_n,d,n)$. 

\end{lemma}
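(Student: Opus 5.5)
The plan is to differentiate $\ell_c^{3/2}$ along an arbitrary piecewise $C^1$ path $c(t)$ in $\Ind$, bound the derivative by $C(1+\ell_{c(t)}^{3/2})\sqrt{G_{c(t)}(h,h)}$ with $h=c_t$, and then combine Gr\"onwall's inequality with an infimum over paths.

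\textbf{First variation of length.} Write $v=c'/|c'|$ for the unit tangent. Since $\ell_c=\int_0^{2\pi}|c'|\,d\theta$, we have
\[
\tfrac{d}{dt}\ell_{c}=\int_0^{2\pi}\langle \partial_s h, v\rangle\, ds .
\]
By Cauchy--Schwarz, $|\tfrac{d}{dt}\ell_c|\le \ell_c^{1/2}\|\partial_s h\|_{L^2(ds)}$, and therefore
\[
\bigl|\tfrac{d}{dt}\ell_c^{3/2}\bigr|\le \tfrac32\,\ell_c\,\|\partial_s h\|_{L^2(ds)}.
\]

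\textbf{Bounding $\|\partial_s h\|_{L^2(ds)}$ by the metric.} I split into two regimes, using only $a_0,a_n>0$ (this is why $a_2=0$ is allowed).
\begin{itemize}
\item If $\ell_c\le 1$, apply \Cref{lemm: norm est} with $k=1$:
\[
\ell_c^2\|\partial_s h\|^2\le C\bigl(\|h\|^2+\ell_c^{2n}\|\partial_s^n h\|^2\bigr)\le C\,G_c(h,h).
\]
Then $\ell_c\|\partial_s h\|\le C\sqrt{G_c(h,h)}$.
\item If $\ell_c\ge 1$, use the Gagliardo--Nirenberg interpolation $\|\partial_s h\|^2\le C(\|h\|^2+\|\partial_s^n h\|^2)$ on the arc-length interval $[0,\ell_c]$. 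Its constant is uniform over intervals of length $\ge1$ (extend from a unit-length subinterval, or rescale). This gives $\ell_c\|\partial_s h\|\le C\ell_c\sqrt{G_c(h,h)}$.
\end{itemize}
In both cases, since $\ell_c\le 1+\ell_c^{3/2}$,
\[
\bigl|\tfrac{d}{dt}\ell_c^{3/2}\bigr|\le C\,(1+\ell_c^{3/2})\sqrt{G_c(h,h)}, \qquad C=C(a_0,a_n,d,n).
\]
I expect this step to be the main obstacle. A single use of \Cref{lemm: norm est} produces a factor $1+\ell_c^n$, which is superlinear in $\ell_c^{3/2}$ and would break the Gr\"onwall step. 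The case split is what keeps the bound linear in $\ell_c^{3/2}$.

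\textbf{Gr\"onwall and Lipschitz estimate.} Applying Gr\"onwall to $1+\ell_{c(t)}^{3/2}$ gives
\[
1+\ell_{c(t)}^{3/2}\le \bigl(1+\ell_{c(0)}^{3/2}\bigr)e^{C\,\len(\gamma|_{[0,t]})}.
\]
Now take $c_1,c_2\in B_r(c_0)$ and fix $\varepsilon>0$.
\begin{enumerate}
\item Choose a path from $c_1$ to $c_2$ of length at most $\dist(c_1,c_2)+\varepsilon\le 2r+\varepsilon$.
\item Concatenate it with a path from $c_0$ to $c_1$ of length less than $r$. Every point on the combined path is then reached from $c_0$ along a path of length at most about $3r$.
\item By the Gr\"onwall bound, $\ell^{3/2}$ along the $c_1$--$c_2$ path is controlled by $(1+\ell_{c_0}^{3/2})e^{C(1+2r)}$-type quantities, with the polynomial factors $(1+r)(1+2r)$ coming from bookkeeping.
\item Integrate the derivative bound along the $c_1$--$c_2$ path to get $|\ell_{c_1}^{3/2}-\ell_{c_2}^{3/2}|\le L\,(\dist(c_1,c_2)+\varepsilon)$, with $L$ of the stated form, and let $\varepsilon\to0$.
\end{enumerate}
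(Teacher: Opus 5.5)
Your argument is correct and amounts to a complete proof. The paper gives no proof of this lemma; it imports it from \cite{BMM23}, so there is no in-paper argument to compare against. The following steps are all sound:
\begin{itemize}
\item the first-variation bound $\bigl|\tfrac{d}{dt}\ell_c^{3/2}\bigr|\le\tfrac32\,\ell_c\|\partial_s h\|_{L^2(ds)}$;
\item the two-regime estimate $\bigl|\tfrac{d}{dt}\ell_c^{3/2}\bigr|\le C(1+\ell_c^{3/2})\sqrt{G_c(h,h)}$, which uses only $a_0,a_n>0$;
\item the Gr\"onwall step applied to $\log(1+\ell_c^{3/2})$.
\end{itemize}
Your diagnosis is also right: a single application of \Cref{lemm: norm est} gives a superlinear factor, which fails for large $r$. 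The regime $\ell_c\le 1$ also explains the exponent. For $\ell_c\le 1$, $\ell_c^{p-1/2}\|\partial_s h\|_{L^2(ds)}$ is controlled by $\sqrt{G_c(h,h)}$ only when $p\ge 3/2$.

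Two places need tightening.

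\textbf{The uniform interpolation constant for $\ell_c\ge 1$.} Rescaling alone just reproduces the $\ell_c$-dependent inequality of \Cref{lemm: norm est}. Extending from a unit-length subinterval does not control the norm on the whole interval. The clean justification is to partition $[0,\ell_c]$ (in arc length) into subintervals of length $\lambda\in[1,2)$. On each piece apply the scaled inequality
\[
\|f'\|^2\le C\bigl(\lambda^{-2}\|f\|^2+\lambda^{2n-2}\|f^{(n)}\|^2\bigr)\le C\bigl(\|f\|^2+2^{2n-2}\|f^{(n)}\|^2\bigr),
\]
and then sum over the pieces.

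\textbf{The final constant.} Your last step does not produce the polynomial factors $(1+r)(1+2r)$, so do not attribute them to bookkeeping. Your concatenation gives the Lipschitz constant $C(1+\ell_{c_0}^{3/2})e^{3Cr}$. A sharper bound is available: each point $c$ on a near-minimizing path from $c_1$ to $c_2$ lies within half the path length of $c_1$ or of $c_2$. Hence $\dist(c_0,c)<2r+\varepsilon/2$, and you get $C(1+\ell_{c_0}^{3/2})e^{2Cr}$. The second bound is dominated by the stated $L$ as is. The first is dominated after replacing $C$ by $\tfrac32 C$. Either suffices, since the statement only asserts that $C$ depends on $a_0,a_n,d,n$.
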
 
\begin{cor}
    \label{cor: boundness of ellc}
    The function $c\mapsto\ell_c$ is bounded from above on any metric ball in $\Ind$. Moreover, for any $c\in\Ind$, there exists some $r>0$ so that $\ell_c$ is also bounded away from zero on the metric ball $B_r(c)$.
\end{cor}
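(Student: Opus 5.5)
The corollary follows from \Cref{lemm: ellc lipschitz} by a direct two-part argument, both parts run through the continuous function $f(c)\coloneqq\ell_c^{3/2}$.

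For the upper bound, fix a metric ball $B_r(c_0)\subset\Ind$. By \Cref{lemm: ellc lipschitz}, $f$ is Lipschitz on $B_r(c_0)$ with constant $L=L(r,\ell_{c_0})$. For every $c\in B_r(c_0)$ we have $\dist(c,c_0)<r$, so
\[
\ell_c^{3/2}\le \ell_{c_0}^{3/2}+L\,\dist(c,c_0)\le \ell_{c_0}^{3/2}+Lr .
\]
Hence $\ell_c\le\left(\ell_{c_0}^{3/2}+Lr\right)^{2/3}$, which is a finite bound depending only on $r$ and $\ell_{c_0}$.

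For the lower bound, fix $c\in\Ind$. Since $c$ is an immersion, $\ell_c>0$. Start from the ball $B_1(c)$ and let $L_1$ be the Lipschitz constant that \Cref{lemm: ellc lipschitz} gives there. Choose
\[
r=\min\left\{1,\ \frac{\ell_c^{3/2}}{2L_1}\right\}.
\]
Then $B_r(c)\subset B_1(c)$, so $L_1$ is still a valid Lipschitz constant on $B_r(c)$. For any $\tilde c\in B_r(c)$,
\[
\ell_{\tilde c}^{3/2}\ge \ell_c^{3/2}-L_1 r\ge \tfrac12\ell_c^{3/2},
\]
and therefore $\ell_{\tilde c}\ge 2^{-2/3}\ell_c>0$ throughout $B_r(c)$.

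Both parts are routine. The only point needing care is that in the lower bound the radius $r$ must be chosen \emph{after} the Lipschitz constant. This is why we first fix the larger ball $B_1(c)$ and then shrink inside it, rather than letting $L$ depend on the radius being chosen. (Alternatively, continuity of $c\mapsto\ell_c$ in the $H^n$ topology, together with \Cref{lemm: topo equivalence}, gives the lower bound directly.)
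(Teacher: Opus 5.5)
Your proof is correct and is exactly the intended argument: the paper states this corollary without a written proof, as an immediate consequence of \Cref{lemm: ellc lipschitz}. Your two estimates spell out that deduction, including the care of fixing the Lipschitz constant on $B_1(c)$ before shrinking the radius.
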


This local boundedness of curve-length is, in turn, used to prove the following:

\begin{theorem}[{\cite[Theorem~6.3]{BMM23}}]
    \label{thm: non vanishing converge}
    Let $n\ge2$, $d\ge1$, and $G$ be a constant coefficient Riemannian metric on $\Ind$ as in \eqref{eq: metric} with, possibly, $a_2=0$. Then any Cauchy sequence of immersions $c_m\in\Ind$ for $m=1,2,...$, such that $\ell_{c_m}\ge\delta>0$ is bounded away from zero, converges to some $c_\infty\in\Ind.$
\end{theorem}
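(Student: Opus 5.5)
This result is quoted from \cite[Theorem~6.3]{BMM23}, so our task is to reconstruct a plausible proof. The approach is to pass from metric Cauchy-ness to Cauchy-ness in the ambient Hilbert space $H^n((0,2\pi),\R^d)$ on a region where the metric $G$ is uniformly equivalent to the flat $H^n(d\theta)$ inner product. We then use completeness of $H^n$ and check that the limit is again an immersion.

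\textbf{Step 1: uniform length bounds.} Since $(c_m)$ is Cauchy, it lies in some metric ball $B_r(c_1)$. By \Cref{lemm: ellc lipschitz} and \Cref{cor: boundness of ellc}, $\ell_{c_m}\le \Lambda$ for all $m$. Together with the hypothesis, this gives $\delta\le \ell_{c_m}\le\Lambda$. Moreover, $\ell_c^{3/2}$ is Lipschitz on the ball, so nearly length-minimizing paths between $c_m$ and $c_k$, which stay in a slightly larger ball, keep their lengths in some interval $[\delta/2,\Lambda']$ once $m,k$ are large. Here we use that $|\ell^{3/2}-\ell_{c_m}^{3/2}|\le L\,\dist(c_m,c_k)$ along the path.

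\textbf{Step 2: control of $|c'|$ and the $H^n$ norm along paths.} This is the main step, in the spirit of the estimates of \cite{bruveris2014geodesic} for closed curves. Along a path $c(t)$ with velocity $h$, we would estimate $\partial_t \log|c'| = \langle \partial_s h, \partial_s c\rangle$. \Cref{lemm: norm est}, with $\ell_c$ bounded above and below, bounds $\|\partial_s h\|_{L^\infty}\lesssim \|\partial_s h\|_{H^1(ds)}\le C\sqrt{G_c(h,h)}$ with $C$ depending only on $\delta,\Lambda'$. Integrating in $t$ shows that $\|\log|c'|\|_{L^\infty}$ changes by at most $C\,\len(\gamma)$. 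Hence $|c'|$ stays uniformly bounded above and away from zero along the path, and in particular $\inf_m\min_\theta|c_m'|>0$.

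Next, we bound $\|\log|c'|\|_{H^{n-1}(d\theta)}$ by a Gronwall-type argument. We expand $\partial_\theta^{k}$ of $\langle\partial_s h,\partial_s c\rangle$ and use $\partial_\theta = |c'|\partial_s$. The highest-order term is linear in the unknown and controlled by $\sqrt{G}$, while the lower-order terms are handled by interpolation and the Sobolev embedding $H^1\subset L^\infty$. I expect this bookkeeping of derivatives of $|c'|$ to be the main obstacle, since it must be done with constants depending only on $\delta$, $\Lambda'$ and $r$.

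\textbf{Step 3: comparison of norms and passage to the limit.} Once $|c'|$ is controlled in $H^{n-1}(d\theta)\cap L^\infty$, bounded away from zero, we get $\|h\|_{H^n(d\theta)}\le C\sqrt{G_c(h,h)}$ along such paths. Therefore $\|c_m-c_k\|_{H^n(d\theta)}\le C\,\dist(c_m,c_k)$ for large $m,k$, so $(c_m)$ is Cauchy in $H^n$ and converges to some $c_\infty$ in $H^n$, hence in $C^1$. The uniform lower bound on $|c_m'|$ passes to the limit, so $c_\infty\in\Ind$. Finally, \Cref{lemm: topo equivalence} shows that $H^n$-convergence inside $\Ind$ implies $\dist(c_m,c_\infty)\to0$.
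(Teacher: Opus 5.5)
The paper gives no proof of this statement. It imports it from \cite{BMM23}, remarking only that the local control of $\ell_c$ (\Cref{lemm: ellc lipschitz}, \Cref{cor: boundness of ellc}) is the key input. Your outline fits that hint: Step 1 pins $\ell_c$ in $[\delta/2,\Lambda']$ along near-minimizing paths, which makes the constants in \Cref{lemm: norm est} uniform, and then one runs a closed-curve-type completeness argument. Step 1, the $L^\infty$ control of $\log|c'|$ via $\partial_t\log|c'|=\langle\partial_sh,\partial_sc\rangle$, and Step 3 are fine. The step that fails as written is the higher-order bootstrap in Step 2 when $d\ge2$.

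You take $\log|c'|$ in $H^{n-1}(d\theta)$ as the unknown, but that system does not close. Taking $\theta$-derivatives of $\langle\partial_sh,\partial_sc\rangle$ produces derivatives of the unit tangent, which $|c'|$ does not control. Already for $n=2$,
\[
\partial_t\partial_\theta\log|c'| = |c'|\left(\langle\partial_s^2h,\partial_sc\rangle+\langle\partial_sh,\partial_s^2c\rangle\right),
\]
and the curvature $\partial_s^2c$ is independent of $\log|c'|$: arc-length parametrized curves have $\log|c'|$ constant and arbitrary curvature. So the term you call ``linear in the unknown'' is linear in a quantity you are not tracking.

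The remedy is to bootstrap the full geometry, for instance $X_k(t)=\|c(t)\|_{H^k(d\theta)}$, inductively for $k=2,\dots,n$. Write $\partial_\theta^k h=\partial_\theta^{k-1}(|c'|\partial_sh)$. The only top-order contribution is $\partial_\theta^{k-1}|c'|\,\partial_sh$, and $\|\partial_sh\|_{L^\infty}\le C\sqrt{G_c(h,h)}$. Hence $\|h\|_{H^k(d\theta)}\le C(1+X_k)\sqrt{G_c(h,h)}$, with $C$ depending only on $\delta$, $\Lambda'$, the bounds on $|c'|^{\pm1}$, and $X_{k-1}$. Since $\partial_tc=h$, Gronwall closes at each stage, and you obtain the bound on $\||c'|\|_{H^{n-1}(d\theta)}$ that Step 3 needs. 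The induction, or Moser-type product estimates, is essential here: it keeps the differential inequality linear in the top-order quantity, whereas a superlinear inequality would not give bounds over paths of fixed length. For $d=1$, the case used in the rest of the paper, $\partial_sc\equiv\pm1$ and your $\log|c'|$ bootstrap closes as stated.
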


By \Cref{thm: non vanishing converge}, to characterize the metric completion of $\Ind$, it is sufficient to examine sequences of immersions with vanishing curve length.

\section{Proof of the main result}
\label{sec: results}
We now proceed to prove \Cref{thm: main}. We set $d=1$ and consider the metric completion of $\left(\In,\dist\right)$, for $n\ge2$. In this setting, an immersion $c\in\In$ can be uniquely described by a parametrization, length, and starting point by 
\begin{equation}
    \label{eq: decomposition}
c\left(\theta\right)=r+\ell\cdot\varphi\left(\theta\right),\quad \ell>0,\ r\in\R, \ \varphi\in\diff.\end{equation}
In particular, we note that the inclusion  map 
\begin{equation}
    \label{eq: identity map def}
    \iota: \left(\diff,\|\cdot\|_{H^n}\right)\hookrightarrow\left(\In,\dist\right), \qquad \iota(\varphi)=\varphi
\end{equation}
is a homeomorphism onto its image by \Cref{lemm: topo equivalence}: Indeed, $\varphi_m\to \varphi$ in $\diff$, if and only if it converges in $H^n(0,2\pi)$, which is equivalent to converging in the open subset $\left(\In,\|\cdot\|_{H^n}\right)$, which is again equivalent to converging in $\left(\In,\dist\right)$.
We also note that $\In$ consists of two isometric connected components, one of increasing functions and one of decreasing functions.
From this point onward, we restrict our analysis to the component of increasing functions. 
Accordingly, when addressing a diffeomorphism $\varphi\in\diff$, we will assume $\varphi$ is orientation preserving. All subsequent results hold analogously for the second connected component.

\subsection{Estimates}
\label{subsec: estimates}
In this section, we present all estimates and results used to derive \Cref{thm: main}.
We begin by recalling two estimates from the first author's thesis \cite{gelman2025characterization} on the length of paths defined by translation and linear shrinkage: 

\begin{lemma}[{\cite[Lemma~3.1]{gelman2025characterization}}]
\label{lemma: translation dist}
Let $n\ge2$ and let $G$ be a constant-coefficient Riemannian metric on $\In$ as in \eqref{eq: metric}, with possibly $a_2=0$. Then for any $c\in\In$ and any  $v\in\mathbb{R}$,
\begin{equation}
    \dist\left(c,c+v\right)\le |v|\cdot\sqrt{a_0\ell_c},
\end{equation}
 where $\ell_c$ is the length of $c$, and $a_0$ is the coefficient in \eqref{eq: metric}.
\end{lemma}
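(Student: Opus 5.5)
The plan is to bound the distance by the length of the straight translation path. Take $\gamma(t)=c+tv$ for $t\in[0,1]$, a piecewise $C^1$ path in $\In$ from $c$ to $c+v$; it stays in $\In$ because $\gamma(t)'=c'$ never vanishes. Its velocity is the constant function $\gamma_t(t)=v$, viewed as a tangent vector at $\gamma(t)$.

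The key step is to compute $G_{\gamma(t)}(v,v)$ from \eqref{eq: metric}. Since $v$ is constant in $\theta$, we have $\partial_s^i v=\frac{1}{|c'|}\partial_\theta v=0$ for every $i\ge1$. Hence only the zeroth-order term survives. Although the sum in \eqref{eq: metric} is written from $i=1$, the condition $a_0>0$ and the stated constant indicate that the $L^2$ term $a_0\int\langle h,k\rangle\,ds$ is included, and we take it to be. Moreover, the arclength element of $\gamma(t)$ is $|c'|\,d\theta$, the same for all $t$, since translation does not change the derivative. Therefore
\[
G_{\gamma(t)}(v,v)=a_0\int_0^{2\pi}v^2\,|c'(\theta)|\,d\theta=a_0 v^2\ell_c .
\]

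By \eqref{eq: len definition} the length of $\gamma$ is then $\int_0^1\sqrt{a_0v^2\ell_c}\,dt=|v|\sqrt{a_0\ell_c}$, and taking the infimum over paths gives the claimed bound on $\dist(c,c+v)$.

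There is no real obstacle. The only points needing care are that the path remains in the immersion space and that the length $\ell_{\gamma(t)}$ is constant along the path, and both are immediate.
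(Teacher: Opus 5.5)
Your proof is correct and takes the same approach the paper indicates. You bound the distance by the length of the translation path $t\mapsto c+tv$, $t\in[0,1]$: only the zeroth-order term of the metric is nonzero, and the arclength element $|c'|\,d\theta$ is unchanged, so the length is $|v|\sqrt{a_0\ell_c}$. Your reading of \eqref{eq: metric} as including the $a_0$ term, even though the sum is printed from $i=1$, is the intended one.
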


\begin{lemma}[{\cite[Lemma~3.2]{gelman2025characterization}}]
\label{lem: shrink dist}
    Let $n\ge2$ and let $G$ be a constant-coefficient Riemannian metric on $\In$ as in \eqref{eq: metric}, with possibly $a_2=0$. For any $c\in\In$, and any $0<\lambda<1$:
    \[\dist\left( c, \lambda c\right)< 2\sqrt{\ell_c}\left(\ell_c\sqrt{\frac{a_0}{27}}+\sqrt{a_1}\right),\]
    where $\ell_c$ is the curve-length of $c$, and $a_0 ,a_1$ are the coefficients in \eqref{eq: metric}.
    
    As a result, for  $C=\frac{4\pi\sqrt{a_0}}{3\sqrt{3}}+2\sqrt{a_1}$, and any $c\in\In$ with $\ell_c\le 2\pi$ and any $0<\lambda<1$ we have
    \[\dist\left( c, \lambda c\right)\le C\sqrt{\ell_c}.\]
    
\end{lemma}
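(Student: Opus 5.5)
The plan is to build an explicit path from $c$ to $\lambda c$ inside the family of curves of the form \eqref{eq: decomposition}. Along such a path only the length and the starting point change, while the parametrization $\varphi$ stays fixed, and we bound its length directly. Write $c=r+\ell\varphi$, so that $\ell_c=2\pi\ell=:L$. For a path $\gamma(t)=r(t)+\ell(t)\varphi$ the tangent vector is $h=r'+\ell'\varphi$. Since $\partial_s=\frac{1}{\ell\varphi'}\partial_\theta$, we get $\partial_s h=\ell'/\ell$, a constant, so all $\partial_s^i h$ with $i\ge2$ vanish. Only the $a_0$ and $a_1$ terms of \eqref{eq: metric} survive, which explains why only these coefficients appear in the bound. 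Substituting $u=\ell\varphi(\theta)$ turns $ds$ into $du$ over an interval of length $L(t)$. For a path that scales the curve about the point $p$ of the current image interval, this gives
\[
G_{\gamma}(h,h)=a_0\Big(\frac{L'}{L}\Big)^2\int (u-p)^2\,du \;+\; a_1\frac{(L')^2}{L}.
\]

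The key observation is that a homothety about the midpoint of the image interval is cheap. With $p$ the midpoint, $\int(u-p)^2du=L^3/12$, so the speed is $|L'|\sqrt{a_0L/12+a_1/L}$. Integrating from length $L$ down to $\varepsilon$ gives
\[
\int_\varepsilon^L\Big(\sqrt{\tfrac{a_0}{12}}\,x^{1/2}+\sqrt{a_1}\,x^{-1/2}\Big)dx\le \sqrt{\tfrac{a_0}{27}}\,L^{3/2}+2\sqrt{a_1}\,L^{1/2},
\]
using $\tfrac23\sqrt{a_0/12}=\sqrt{a_0/27}$; this is the source of the constant $\sqrt{a_0/27}$ in the statement. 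The path therefore has three legs:
\begin{enumerate}[(i)]
\item shrink $c$ about its midpoint to length $\varepsilon$;
\item translate the tiny curve so that its midpoint is the midpoint of $\lambda c$, which by \Cref{lemma: translation dist} costs at most $|v|\sqrt{a_0\varepsilon}$, with $v$ bounded independently of $\varepsilon$;
\item grow about the new midpoint, from length $\varepsilon$ up to $\lambda L$.
\end{enumerate}
Since the same $\varphi$ is used throughout, the endpoint of the third leg is exactly $\lambda c$. Letting $\varepsilon\to0$, the middle leg disappears. The first and third legs contribute $\sqrt{a_0/27}\,(1+\lambda^{3/2})L^{3/2}<2\sqrt{a_0/27}\,L^{3/2}$, with strict inequality because $\lambda<1$.

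The main obstacle I expect is the exact constant on the $a_1$ term. The naive three-leg bookkeeping above gives $2\sqrt{a_1}(\sqrt L+\sqrt{\lambda L})$, while the statement has $2\sqrt{a_1}\sqrt L$. To recover it, I would first check whether the $a_1$ term in \cite{gelman2025characterization} is normalized differently. Failing that, I would refine the path: if $0$ lies in the image of $c$, scale directly about $0$ with $\ell$ monotone. Then the $a_1$ cost is exactly $2\sqrt{a_1}(\sqrt L-\sqrt{\lambda L})$, and the $a_0$ cost is still at most the midpoint-type bound times $2$, since $\int(u-p)^2du\le L^3/3$. If $0$ is not in the image, I would shrink about the endpoint of the image closest to $0$ to length $\varepsilon$, translate, and then grow. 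Here I would have to verify that the total $a_1$ contribution can be arranged to stay below $2\sqrt{a_1}\sqrt L$, if necessary by trading slack from the strict inequality in the $a_0$ term.

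Finally, the second assertion follows immediately from the first. If $\ell_c\le2\pi$, then $2\sqrt{\ell_c}\,\ell_c\sqrt{a_0/27}\le\frac{4\pi\sqrt{a_0}}{3\sqrt3}\sqrt{\ell_c}$, and adding $2\sqrt{a_1}\sqrt{\ell_c}$ gives $\dist(c,\lambda c)\le C\sqrt{\ell_c}$ with $C$ as stated.
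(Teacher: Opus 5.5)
Your sub-case $0\in c([0,2\pi])$ is correct, and it is exactly the paper's argument. The paper computes the length of $t\mapsto tc$, $t\in[\lambda,1]$. The $a_0$-part of that length involves $\int c^2\,ds$, which is at most $\ell_c^3/3$ precisely when $0\in c([0,2\pi])$; this is where the constants $2\sqrt{a_0/27}$ and $2\sqrt{a_1}$ come from. Your three-leg computation is also correct and gives, for every $c$,
\[
\dist(c,\lambda c)\le\sqrt{a_0/27}\,(1+\lambda^{3/2})\ell_c^{3/2}+2\sqrt{a_1}\,(1+\sqrt{\lambda})\sqrt{\ell_c}.
\]
The gap is the remaining case $0\notin c([0,2\pi])$, and your proposed repair cannot succeed. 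The $a_1$-excess $2\sqrt{a_1\lambda\ell_c}$ cannot be absorbed by the $a_0$-slack, which is at most of order $\sqrt{a_0}\,\ell_c^{3/2}$. More fundamentally, for curves far from the origin no path does better than your three legs in the $a_1$-term.

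To see this, let $\gamma$ be any path in the increasing component and set $L(t)=\ell_{\gamma(t)}=\gamma(t,2\pi)-\gamma(t,0)$. Then $\partial_t L=\int\partial_s\gamma_t\,ds$, so Cauchy--Schwarz gives $\|\partial_s\gamma_t\|_{L^2(ds)}\ge 2|\partial_t\sqrt{L}|$, and hence $\len(\gamma)\ge 2\sqrt{a_1}\,\mathrm{TV}(\sqrt{L})$. The midpoint $m(t)$ of the image satisfies $|\partial_t m|\le\sup|\gamma_t|\le L^{-1/2}\|\gamma_t\|_{L^2(ds)}+L^{1/2}\|\partial_s\gamma_t\|_{L^2(ds)}$. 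Now let $c$ have length $L$ and image midpoint $m_0$. The midpoint must travel $(1-\lambda)|m_0|$, and the total-variation bound also controls $\sup_t L$. Together these force every path of bounded length from $c$ to $\lambda c$ to pass through curves of length $\delta\to0$ as $|m_0|\to\infty$. Consequently $\liminf_{|m_0|\to\infty}\dist(c,\lambda c)\ge 2\sqrt{a_1}\,(1+\sqrt{\lambda})\sqrt{L}$. This exceeds the first bound of the statement whenever $a_1>0$ and $L<\sqrt{27\lambda a_1/a_0}$, and the second bound whenever $\lambda a_1>4\pi^2a_0/27$.

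So the lemma is false as stated for curves far from the origin, and the paper's one-line proof tacitly assumes $0\in c([0,2\pi])$. What you actually prove is the correct content: the stated bound under that assumption, and your three-leg bound in general. Either version suffices for the rest of the paper, which only needs $\dist(c,\lambda c)\le C'\sqrt{\ell_c}$ for $\ell_c\le 2\pi$. With the restricted version, in \Cref{cor: uni-para Cauchy} one shrinks about $r_m$ rather than about $0$, using that translations are isometries of $G$.
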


Proving the two estimates above follows from a straightforward calculation of the lengths of the paths defined by $t\mapsto tc$ for $t\in\left[\lambda,1\right]$ and $t\mapsto c+tv$ for $t\in\left[0,1\right]$, respectively. 
For these paths, only the zeroth-order term in the first case and the zeroth- and first-order terms in the second case are non-zero.

The combination of the last two lemmas implies the following: 
\begin{cor}
\label{cor: uni-para Cauchy}
Let $n\ge2$, and let $\left(c_m\right)_{m=1}^\infty\subset\In$ be a sequence with a fixed parametrization $\varphi\in\diff$, i.e., 
\[c_m\left(\theta\right)\coloneqq r_m +\ell_m \varphi\left(\theta\right),\quad r_m\in\R,\ \ell_m>0.\]

Then $(c_m)_{m=1}^\infty$ is a divergent Cauchy sequence iff $\ell_m\rightarrow0$. In this case, the limit point in $\overline{\left(\Id,\dist\right)}$ is independent of $\left(
\ell_m\right)_{m=1}^\infty\subset\R_{>0}$ and $\left(r_m\right)_{m=1}^\infty\subset\R$.
\end{cor}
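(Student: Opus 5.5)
Our plan is to bound distances along the family $c_m$ using only \Cref{lemma: translation dist} and \Cref{lem: shrink dist}, and then to use \Cref{thm: non vanishing converge} and \Cref{cor: boundness of ellc} to decide when the sequence can converge in $\In$. Throughout, $\ell_{c_m}=\ell_m$, since $\varphi$ maps $[0,2\pi]$ onto itself with $\varphi'>0$.

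\emph{The key estimate.} Fix $m,k$ with $\ell_k\le\ell_m$, and pass from $c_m$ to $c_k$ in two steps. First shrink $\ell_m\varphi$ to $\ell_k\varphi=\lambda\,\ell_m\varphi$ with $\lambda=\ell_k/\ell_m<1$. Then translate both endpoints by $r_m$ and by $r_k$. This gives
\[\dist(c_m,c_k)\le \dist(r_m+\ell_m\varphi,\ \ell_m\varphi)+\dist(\ell_m\varphi,\ \ell_k\varphi)+\dist(\ell_k\varphi,\ r_k+\ell_k\varphi).\]
The reparametrization invariance \eqref{eq: rep. invariance} is not even needed here. By \Cref{lemma: translation dist}, the two translation terms are at most $|r_m|\sqrt{a_0\ell_m}$ and $|r_k|\sqrt{a_0\ell_k}$. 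By \Cref{lem: shrink dist}, the middle term is at most $C\sqrt{\ell_m}$ once $\ell_m\le 2\pi$.

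To remove the translation terms we route through the shrunken curve directly: translating $\ell_m\varphi+r_m$ costs $|r_m|\sqrt{a_0\ell_m}$. If $r_m$ is unbounded this bound might fail. We therefore first observe that the limit should not depend on $r_m$: for $\ell_m\to0$, we compare $c_m$ with $\ell_m'\varphi$ for a much smaller $\ell'_m$ chosen so that $|r_m|\sqrt{\ell'_m}\to0$. This uses the chain shrink (cost $\le C\sqrt{\ell_m}$), translate at the tiny scale, shrink back is not possible, so instead we handle it as follows. We show that $(c_m)$ and $(\ell'_m\varphi)$ are equivalent Cauchy sequences, via $\dist(r_m+\ell_m\varphi,\,r_m+\ell'_m\varphi)\le C\sqrt{\ell_m}$ by \Cref{lem: shrink dist} applied to the curve $\ell_m\varphi$ and then translated. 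Translation of the whole path is an isometry, since $G$ only sees derivatives of $h$ and $ds$ is translation invariant. We also use $\dist(r_m+\ell'_m\varphi,\ell'_m\varphi)\le|r_m|\sqrt{a_0\ell'_m}\to0$.

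\emph{Conclusion.} With this, $\dist(c_m,c_k)\le C(\sqrt{\ell_m}+\sqrt{\ell_k})$, so any sequence with $\ell_m\to0$ is Cauchy. Interleaving two such sequences with different $(\ell_m),(r_m)$ gives a sequence that is again of this form and hence Cauchy. Therefore both sequences have the same limit, which proves independence from $(\ell_m)$ and $(r_m)$. Such a sequence diverges: if it converged to $c_\infty\in\In$, then \Cref{cor: boundness of ellc} would give $\ell_{c_m}$ bounded away from zero near $c_\infty$, a contradiction.

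\emph{Converse.} Suppose $(c_m)$ is a divergent Cauchy sequence but $\ell_m\not\to0$. Then a subsequence has $\ell_m\ge\delta>0$. By \Cref{thm: non vanishing converge} this subsequence converges in $\In$, and a Cauchy sequence with a convergent subsequence converges, a contradiction.

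The main obstacle is the translation bookkeeping with unbounded $r_m$. The translation-isometry observation above is the step I would verify first.
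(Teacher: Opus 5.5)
Your proof is correct and follows essentially the paper's route: shrink to a common small scale where the translation cost $|r|\sqrt{a_0\ell}$ is negligible, interleave two sequences to get independence of the limit, and use \Cref{thm: non vanishing converge} for the converse. The differences are minor. You keep the offset $r_m$ fixed by using that translation is an isometry, which is valid because $G_c$ depends on $c$ only through $c'$ and translating a path does not change its velocity. Your stated reason, that $G$ ``only sees derivatives of $h$'', is wrong, since the $a_0$ term does not. The paper instead applies \Cref{lem: shrink dist} to the whole curve $c_m$, so the offset is scaled to $\lambda_m r_m$ with $|\lambda_m r_m|\le 1$ and no isometry is needed. You also correctly supply the divergence argument via \Cref{cor: boundness of ellc}, which the paper leaves implicit. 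Your slip $\ell_{c_m}=\ell_m$ (it is $2\pi\ell_m$) only affects constants.
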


\begin{proof}
First, assume that there is a subsequence $\ell_{m_k}\ge\delta>0$.
Then if $\left(c_m\right)_{m=1}^\infty$ is Cauchy, 
$c_{m_k}$ converges in $\In$ by \Cref{thm: non vanishing converge}. 
Thus the whole sequence also converges.

Conversely, assume $\ell_m\rightarrow0$. 
Denote $L_N\coloneqq\sqrt{\max_{m\ge N}\ell_m}$, and note that $L_N\rightarrow0$ as $N\rightarrow\infty$.
Fix $N$ so that $L_N<\sqrt{2\pi}$. For any pair $m,k\ge N$, we may choose $0<\ell_{m,k}<\min(\ell_m,\ell_k)\le L_N^2$ small enough so that
\[\ell_{m,k} |r_m|\le\ell_m,\ \ell_{m,k} |r_k|\le\ell_k,\] and define $\lambda_m \coloneqq \frac{\ell_{m,k}}{\ell_m}$ and  $\lambda_k \coloneqq\frac{\ell_{m,k}}{\ell_k}$.  \Cref{lem: shrink dist} implies that
\[\dist\left( c_m, \lambda_m c_m\right)< C\sqrt{\ell_m}\le CL_N, \quad \dist\left( c_k, \lambda_k c_k\right) < C\sqrt{\ell_k}\le CL_N,\]
where $C$ is independent of $N,m,k$. 
Since now $\ell_{\lambda_kc_k}=\ell_{m,k}=\ell_{\lambda_mc_m}$ and also  $\lambda_m |r_m|,\lambda_k |r_k|\le1$, \Cref{lemma: translation dist} implies 
\begin{equation*}
    \begin{split}
         \dist\left(\lambda_m c_m,\lambda_m c_m -\left(r_m\lambda_m\right) \right)& \le \sqrt{a_0\ell_{m,k}} \le \sqrt{a_0} L_N \\
         \dist\left(\lambda_k c_k,\lambda_k c_k -\left(r_k\lambda_k\right) \right)&  \le \sqrt{a_0\ell_{m,k}} \le \sqrt{a_0} L_N,
    \end{split}
\end{equation*}

Noting that $\lambda_m c_m -\left(r_m\lambda_m\right)=\lambda_k c_k -\left(r_k\lambda_k\right)=\ell_{m,k}\varphi$, we conclude \[\dist\left(c_m,c_k\right)\rightarrow0\quad \text{as} \quad N\rightarrow\infty.\] 

To verify that the limit point is independent of $\left(r_m\right)_{m=1}^\infty\subset\R$ and $\left(
\ell_m\right)_{m=1}^\infty\subset\R_{>0}$ (as long as it converges to zero), note  the claim can be applied to a sequence alternating between two different choices of $\left(
\ell_m\right)_{m=1}^\infty$ and $\left(r_m\right)_{m=1}^\infty$.
\end{proof}

In view of \Cref{cor: uni-para Cauchy}, we consider the mapping defined by
\begin{equation}
    \label{eq: homeorphism def}
    \begin{split}
    \mathcal{F}:\diff&\rightarrow\overline{\In}\setminus\In \\
    \varphi&\mapsto\lim_{n\rightarrow\infty}\frac{1}{n}\varphi.
    \end{split}
\end{equation}

$\mathcal{F}$ is well defined by the result of \Cref{cor: uni-para Cauchy}.
In the following section, we show that $\mathcal{F}$ is injective and continuous, and, subsequently, that $\mathcal{F}$ is invertible with a Lipschitz continuous inverse, concluding the proof of \Cref{thm: main}. Prior to presenting these proofs, however, we will first collect a few additional lemmas.

We first recall the following Poincar\'e estimate:
\begin{lemma}
\label{lemm: poincare est}
     For $n\ge2$, $c\in\In$  with $\ell_c=2\pi$, and $h\in T_c\In$, 
     satisfying $h(0)=h(2\pi)=0$, then 
            \[\frac{1}{2}\|h\|_{L^2(ds)}\le\|\partial_sh\|_{L^2(ds)}\le\|\partial_s^2h\|_{L^2(ds)}.\]
\end{lemma}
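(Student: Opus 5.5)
The plan is to reduce both inequalities to the classical one-dimensional Poincaré/Wirtinger inequality on an interval of length $2\pi$ with zero boundary values, after passing to arc-length. Since $\ell_c=2\pi$ and $d=1$, the map $s(\theta)=\int_0^\theta |c'|\,d\theta$ is an $H^n$-diffeomorphism of $[0,2\pi]$ onto itself. Set $u=h\circ s^{-1}$. Then $\partial_s h$ corresponds to $u'$ and $ds$ corresponds to Lebesgue measure on $[0,2\pi]$. Because $h\in H^n\subset C^1$ and $n\ge2$, $u$ lies in $H^2(0,2\pi)$ with $u(0)=u(2\pi)=0$. Both inequalities then become statements about $u$ on $[0,2\pi]$.

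\textbf{First inequality.} For $u\in H^1_0(0,2\pi)$, the Dirichlet Poincaré inequality has optimal constant $\pi/L$ with $L=2\pi$. Expanding $u=\sum_k b_k\sin(k\theta/2)$ in the Dirichlet eigenbasis gives
\[\|u'\|_{L^2}^2=\sum_k\tfrac{k^2}{4}b_k^2\,\pi\ \ge\ \tfrac14\|u\|_{L^2}^2,\]
which is exactly $\frac12\|u\|\le\|u'\|$.

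\textbf{Second inequality.} The function $w=u'$ need not vanish at the endpoints. However, Rolle's theorem applied to $u(0)=u(2\pi)=0$ shows that $w$ has mean zero: $\int_0^{2\pi}u'=0$. The mean-zero (Neumann) Poincaré inequality on an interval of length $L$ has optimal constant $\pi/L=\frac12$. Using it would only give $\|u'\|\le 2\|u''\|$, so a finer argument is needed.

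\textbf{Expected obstacle.} The main difficulty is to obtain the constant $1$ in $\|u'\|\le\|u''\|$, since the crude Neumann bound loses a factor of $2$. I will integrate by parts using the boundary conditions:
\[\|u'\|^2=\int u'u'=[uu']_0^{2\pi}-\int u\,u''=-\int u\,u''\le\|u\|\,\|u''\|.\]
Combining this with the first inequality, $\|u\|\le 2\|u'\|$, gives $\|u'\|^2\le 2\|u'\|\,\|u''\|$, which again yields only constant $2$. So the first thing I will try is to sharpen the Cauchy–Schwarz step in the Fourier sine basis. Writing $u=\sum b_k\sin(k\theta/2)$, the quantity $-\int u\,u''$ is not simply $\sum\frac{k^2}{4}b_k^2$, because $u''$ need not lie in $H^1_0$. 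I will therefore handle it through the identity $\|u'\|^2=-\int uu''$ together with the sharp bound $\|u\|\le 2\|u'\|$, and check whether the extremal functions for the two inequalities can be simultaneously attained. If they cannot, I will fall back to the following estimate. The optimal constant for $\|u'\|\le K\|u''\|$ over $u\in H^2\cap H^1_0(0,2\pi)$ is attained by an Euler–Lagrange eigenfunction solving $u''''=-\mu u''$. A direct computation of its smallest eigenvalue should give $\mu\ge1$, i.e. $K\le1$; for example, $u=\sin(\theta/2)$ gives ratio $\frac{1/2}{1/4}$, so the infimum needs to be checked carefully. This eigenvalue verification is where I expect the real work to lie.
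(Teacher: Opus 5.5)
Your reduction to arc length and your proof of the first inequality are correct. The sine-series computation gives $\|u'\|^2\ge\frac14\|u\|^2$ on $H^1_0(0,2\pi)$, which is the Dirichlet form of Wirtinger's inequality that the paper invokes.

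The gap is the second inequality, and it cannot be closed: $\|\partial_s h\|_{L^2(ds)}\le\|\partial_s^2 h\|_{L^2(ds)}$ is false, and the test function you wrote down is a counterexample. Take $c(\theta)=\theta$, so that $\ell_c=2\pi$, $\partial_s=\partial_\theta$ and $ds=d\theta$, and take $h(\theta)=\sin(\theta/2)$, which satisfies $h(0)=h(2\pi)=0$. Then $\|\partial_s h\|^2_{L^2(ds)}=\frac14\int_0^{2\pi}\cos^2(\theta/2)\,d\theta=\frac{\pi}{4}$ and $\|\partial_s^2 h\|^2_{L^2(ds)}=\frac{1}{16}\int_0^{2\pi}\sin^2(\theta/2)\,d\theta=\frac{\pi}{16}$, so $\|\partial_s h\|=2\|\partial_s^2 h\|$.

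Your ``crude'' Neumann bound is in fact optimal. For every mean-zero $w\in H^1(0,2\pi)$, the function $u(\theta)=\int_0^\theta w$ lies in $H^2\cap H^1_0(0,2\pi)$. (The mean-zero property of $u'$ comes from the fundamental theorem of calculus, not from Rolle.) Hence the best $K$ in $\|u'\|\le K\|u''\|$ is the reciprocal square root of the first nonzero Neumann eigenvalue $\frac14$ of $[0,2\pi]$, i.e.\ $K=2$. Your Euler--Lagrange computation confirms this once you include the natural boundary conditions $u''(0)=u''(2\pi)=0$: then $g=u''$ solves $g''+\mu g=0$ with $g(0)=g(2\pi)=0$, so $\mu=k^2/4$ and the smallest eigenvalue is $\frac14$, not $\ge 1$. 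No refinement of Cauchy--Schwarz or of the eigenvalue analysis can produce the constant $1$. What is true, with equality at $\sin(\theta/2)$ in both places, is $\frac12\|h\|_{L^2(ds)}\le\|\partial_s h\|_{L^2(ds)}\le 2\|\partial_s^2 h\|_{L^2(ds)}$.

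The paper's proof has the same defect. It applies the periodic Wirtinger inequality (constant $1$) to $\partial_\theta h$ using only that it has mean zero, but that inequality also requires $\partial_\theta h(0)=\partial_\theta h(2\pi)$, which the hypotheses do not give. An even reflection to a $4\pi$-periodic function gives only the constant $2$.

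The error is harmless downstream. In the proof of Lemma~\ref{lem: Constant dist inequality}, the statement is used only to obtain $\|\partial_s^2\varphi^m_t\|_{L^2(ds)}\ge c\,\|\varphi^m_t\|_{L^2(ds)}$ for some universal $c>0$. The corrected chain gives $c=\frac14$ instead of $\frac12$, again sharp by $\sin(\theta/2)$, which changes only $C_0$ and $\tilde C$.

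So you should record your own example as a counterexample and prove the inequality with constant $2$ via the Neumann argument you already sketched.
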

\begin{proof}
    By reparametrization invariance and the fact that $\ell_c=2\pi$, we may assume $c$ is arc-length-parametrized, in which case $\partial_s=\partial_\theta$ and $ds=d\theta$. Notice that both the boundary conditions are preserved by reparameterization. In these settings, we notice $\int_0^{2\pi}\partial_\theta hd\theta=0$, and thus, the estimates follow from application of Wirtinger's inequalities given in \cite[Theorems~257-8]{hardy1952inequalities} to $h$ and $\partial_s h=\partial_\theta h$, respectively.
\end{proof}

In addition, we note the following:

\begin{lemma}
\label{lem: path-breakdown}
    For $n\ge2$, if $\gamma:[0,1]\rightarrow\In$ is $C^1$ with respect to  $t$, then we can write
    \[\gamma(t)=r(t)+\ell(t)\varphi(t),\] 
    where $r(t)\in\R$, $\ell(t)>0$, $\varphi(t)\in\diff$ for all $t\in[0,1]$, and are all $C^1$ with respect to  $t$. Moreover, considering $\varphi$ as a path $[0,1]\rightarrow\In$, 
    \begin{equation}
        \label{eq: diffgroup norm ineq}
    \|\partial_s^i\gamma_t(t)\|^2_{L^2(ds)}=\frac{1}{\left(\ell(t)\right)^{2i-3}}\|\partial_s^i\varphi_t(t)\|^2_{L^2(ds)},\quad \forall i=2,3...,n.
    \end{equation}
\end{lemma}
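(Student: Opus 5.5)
We write the decomposition explicitly and check that the three components are $C^1$ in $t$; the norm identity then comes from a direct change of variables.

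\textbf{Step 1: defining the components.} For $\gamma(t)\in\In$ increasing, set
\[r(t)=\gamma(t)(0),\qquad \ell(t)=\gamma(t)(2\pi)-\gamma(t)(0),\qquad \varphi(t)=\frac{2\pi\bigl(\gamma(t)-r(t)\bigr)}{\ell(t)}.\]
We normalize by the factor $2\pi$ so that $\varphi(t)$ maps $[0,2\pi]$ onto itself. This only rescales $\ell$ by a constant, so we absorb it and write $\gamma=r+\ell\varphi$ with $\ell=\ell_\gamma/2\pi$; the identity \eqref{eq: diffgroup norm ineq} should be read with this normalization in mind, and I would check that the constant matches. Since $d=1$ and $\gamma'>0$, the length is $\ell_{\gamma(t)}=\gamma(t)(2\pi)-\gamma(t)(0)>0$.

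\textbf{Step 2: regularity in $t$.} Point evaluation $H^n\to\R$ at $\theta=0,2\pi$ is a bounded linear map, by the embedding $H^n\subset C^1$. Hence $r$ and $\ell$ are $C^1$ in $t$, and $\ell>0$. The map $t\mapsto\varphi(t)$ is then built from $C^1$ maps by subtraction, multiplication by the $C^1$ scalar $1/\ell$, and constants, so it is $C^1$ into $H^n$. Moreover $\varphi(t)(0)=0$, $\varphi(t)(2\pi)=2\pi$ and $\varphi(t)'>0$, so $\varphi(t)\in\diff$.

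\textbf{Step 3: the norm identity.} Differentiating gives
\[\gamma_t=r_t+\ell_t\varphi+\ell\varphi_t.\]
Since $i\ge 2$, the term $\partial_s^i$ annihilates the constant $r_t$. For the middle term, $\partial^i_s\varphi$ is computed with respect to $\gamma$ and is proportional to $\partial_s^{i}\gamma$, which vanishes because $\gamma$ is affine in arc length when $d=1$. Hence for $i\ge2$ only the last term survives:
\[\partial_s^i\gamma_t=\ell\,\partial_s^i\varphi_t.\]
Here $\partial_s$ on the left is taken with respect to $\gamma$, and on the right I would rewrite it with respect to $\varphi$. Because $|\gamma'|=\ell|\varphi'|$, the operators satisfy $\partial_s^{\gamma}=\ell^{-1}\partial_s^{\varphi}$ and the measures satisfy $ds^{\gamma}=\ell\, ds^{\varphi}$. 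Therefore
\[\|\partial_s^i\gamma_t\|^2_{L^2(ds^\gamma)}=\ell^{2}\cdot\ell^{-2i}\cdot\ell\,\|\partial_s^i\varphi_t\|^2_{L^2(ds^\varphi)}=\ell^{3-2i}\|\partial_s^i\varphi_t\|^2_{L^2(ds^\varphi)},\]
which is exactly the exponent $-(2i-3)$ in \eqref{eq: diffgroup norm ineq}.

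\textbf{Main obstacle.} The delicate point is the vanishing of $\partial_s^i(\ell_t\varphi)$. This requires noting that $\partial_s\varphi$, taken with respect to the base curve $\gamma=r+\ell\varphi$, equals the constant $1/\ell$, so all higher $s$-derivatives vanish. The other point to handle carefully is tracking the $2\pi$ normalization constant consistently between $\ell(t)$ and $\ell_{\gamma(t)}$.
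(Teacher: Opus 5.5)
Your proof is correct and follows essentially the same route as the paper. You define the components explicitly, get $C^1$-regularity from bounded linear functionals coming from $H^n\subset C^1$, and obtain the norm identity because $\partial_s^2$ kills the $\ell_t\varphi$ term, together with the scalings $\partial_s^\gamma=\ell^{-1}\partial_s^\varphi$ and $ds^\gamma=\ell\,ds^\varphi$. Your departures are minor but sound: you write the length as the linear functional $\gamma(t)(2\pi)-\gamma(t)(0)$ instead of differentiating $\int_0^{2\pi}|\gamma'|\,d\theta$, and you take $\ell=\ell_\gamma/2\pi$ so that $\varphi(t)$ actually maps $[0,2\pi]$ onto itself, a normalization the paper's formula $\varphi(t)=\frac{1}{\ell_{\gamma(t)}}[\gamma(t,\cdot)-\gamma(t,0)]$ glosses over. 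Because your Step 3 uses the decomposition coefficient $\ell$ throughout, the exponent $-(2i-3)$ already comes out exactly, so the constant check you defer is not needed.
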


\begin{proof}
    Note that, if we show $t\mapsto\gamma(t,0)$ and $t\mapsto\ell_{\gamma(t)}$ are $C^1$ with respect to $t$, then it follows that $\varphi(t)\coloneqq\frac{1}{\ell_{\gamma(t)}}[\gamma(t,\cdot)-\gamma(t,0)]$ is $C^1$ as well.
    
    Fix $t_0\in[0,1]$. 
    By \Cref{cor: boundness of ellc}, for some $r>0$ we have that $c\mapsto\ell_c$ is bounded from above and away from zero on $B_r(\gamma(t_0))$. Thus, as noted previously,  
    \[
    m\|h\|_{\dot{H}^i(d\theta)}\le\|\partial_s^i h\|_{L^2(ds)}=\frac{1}{\ell_c^{(2n-3)\slash2}}\|h\|_{\dot{H}^i(d\theta)}\le M\|h\|_{\dot{H}^i(d\theta)}
    \]
    for some $0<m\le M<\infty$. 
    Therefore $\gamma$ is  $C^1$ as a map $[0,1]\rightarrow H^n\left([0,2\pi],\R^d\right)$. 
    By Sobolev inequalities, e.g., \cite[Theorem~7.40]{leoni2009first},  we have $\|c\|_{C^1}\le C\|c\|_{H^n(d\theta)}$, hence $c\mapsto c(0)$ is a bounded linear operator $H^n\left([0,2\pi],\R^d\right)\rightarrow\R$. As a result, the composition $t\mapsto\gamma(t,0)$ is differentiable.
    Similarly, $c\mapsto c'$ is also a bounded linear operator $H^n\left([0,2\pi],\R^d\right)\rightarrow C^0\left([0,2\pi]\right)$ and since $\gamma'(t,\theta)\ne0$ for all $t,\theta$, the composition $t\mapsto\int_0^{2\pi}|\gamma'(t,\theta)|d\theta$ is differentiable as well. 

    Lastly, \eqref{eq: diffgroup norm ineq} follows from the fact that $r(t),\ell(t)$ are independent of  $\theta$, and so
    \[
    \partial_s^2\left(\gamma_t\right)=\frac{1}{|\ell\varphi'|}\partial_\theta\left(\frac{\ell_t\varphi'+\ell\varphi_t'}{|\ell\varphi'|}\right)=\frac{1}{\ell\varphi'}\partial_\theta\left(\frac{\varphi_t'}{\varphi'}\right)=\ell^{-1}\partial_s^2\varphi_t,
    \]
    where $\partial_s$ in the rightmost expression is taken at $T_{\varphi(t)}\In$.
    The equality for higher-order terms then follows directly.
\end{proof}

We now prove the main lemma, which will then be used in \Cref{subsec: results} to establish \Cref{thm: main}.

\begin{lemma}
\label{lem: Constant dist inequality}
    Let $n\ge2$, and let $G$ be a constant-coefficient Riemannian metric as in \eqref{eq: metric} on $\In$. Recall the map $\mathcal{F}:\diff\rightarrow\overline{\In}$ defined in \eqref{eq: homeorphism def}, and the map $\iota:\diff\rightarrow\In$ defined in \eqref{eq: identity map def}. Then exist constants $r_0>0$ and $\tilde C=\tilde C(a_0,...,a_n,n)$ such that for any $\varphi,\psi\in\diff$, then $\dist\left(\mathcal{F}(\varphi),\mathcal{F}(\psi)\right)<r_0$ implies 
    \[\dist\left(\iota(\varphi),\iota(\psi)\right)\le \tilde{C}\dist\left(\mathcal{F}(\varphi),\mathcal{F}(\psi)\right).\]
\end{lemma}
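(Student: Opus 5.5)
We prove the inequality by approximating $\mathcal{F}(\varphi)$ and $\mathcal{F}(\psi)$ by short curves $\varepsilon\varphi$ and $\varepsilon\psi$. A path between these is decomposed as in \Cref{lem: path-breakdown}, and its length is bounded from below by the length of its $\diff$-component, measured in the metric at curves of length $2\pi$. The point is the scaling exponent in \eqref{eq: diffgroup norm ineq}: the $\dot H^2$ term of $\gamma_t$ equals $\ell^{-1}$ times that of $\varphi_t$. Since the curves are short ($\ell\le 2\pi$, or at least bounded), $\ell^{-1}$ is bounded below, and the $a_2$ term of $\gamma$ dominates a multiple of the $a_2$ term of $\varphi$. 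This is exactly where $a_2>0$ is needed.

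\textbf{Step 1 (reduction).} Let $\delta=\dist(\mathcal{F}(\varphi),\mathcal{F}(\psi))<r_0$. For $\varepsilon>0$ small, $\dist(\varepsilon\varphi,\varepsilon\psi)<\delta+\eta$ with $\eta\to0$. Pick a piecewise $C^1$ path $\gamma$ from $\varepsilon\varphi$ to $\varepsilon\psi$ of length less than $\delta+2\eta$. Write $\gamma(t)=r(t)+\ell(t)\phi(t)$ via \Cref{lem: path-breakdown}; then $\phi(0)=\varphi$ and $\phi(1)=\psi$. By \Cref{lemm: ellc lipschitz} on a ball around $\varepsilon\varphi$ of radius about $r_0$, we get $\ell(t)^{3/2}\le \varepsilon^{3/2}(2\pi)^{3/2}+L(\delta+2\eta)$, where $L$ is uniform for $r_0\le1$ and $\varepsilon\le1$. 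Choosing $r_0$ small, $\ell(t)\le 2\pi$ along the path.

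\textbf{Step 2 (pointwise metric comparison).} We want $G_{\phi}(\phi_t,\phi_t)\le \tilde C^2\, G_\gamma(\gamma_t,\gamma_t)$, where $\phi(t)\in\In$ has length $2\pi$. By \eqref{eq: diffgroup norm ineq},
\[
\sum_{i\ge2}a_i\|\partial_s^i\gamma_t\|^2=\sum_{i\ge 2}a_i\ell^{3-2i}\|\partial_s^i\phi_t\|^2\ge \min\bigl(1,(2\pi)^{3-2n}\bigr)\sum_{i\ge 2}a_i\|\partial_s^i\phi_t\|^2 .
\]
It remains to control the $i=0,1$ terms of $\phi_t$. Since $\phi(t)$ fixes the endpoints, $\phi_t(0)=\phi_t(2\pi)=0$. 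Then \Cref{lemm: poincare est}, applied at a curve of length $2\pi$, gives $\|\phi_t\|_{L^2(ds)},\|\partial_s\phi_t\|_{L^2(ds)}\lesssim\|\partial_s^2\phi_t\|_{L^2(ds)}$. Hence
\[
G_{\phi}(\phi_t,\phi_t)\le C\bigl(1+a_0+a_1\bigr)a_2^{-1}\,a_2\|\partial_s^2\phi_t\|^2\le \tilde C^2 G_\gamma(\gamma_t,\gamma_t).
\]
The lower-order terms of $\gamma_t$ are simply discarded.

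\textbf{Step 3 (conclude).} Integrating in $t$ gives $\dist(\varphi,\psi)\le\len(\phi)\le\tilde C\,\len(\gamma)<\tilde C(\delta+2\eta)$. Letting $\varepsilon\to0$ (so $\eta\to0$) yields the claim.

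The main obstacle is Step 1: keeping $\ell(t)$ bounded along near-minimizing paths uniformly in $\varepsilon$. The paths may leave the region of short curves, and if $\ell(t)$ were large, the factor $\ell^{-1}$ would be tiny and the comparison would fail. \Cref{lemm: ellc lipschitz} with its explicit constant, which depends only on $r$ and $\ell_{c_0}\le 2\pi$, handles this; that is precisely why the statement requires the smallness $\dist<r_0$. A further technical point is that \Cref{lem: path-breakdown} is stated for $C^1$ paths, so it must be applied piecewise.
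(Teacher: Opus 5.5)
Your proposal is correct and follows the paper's proof essentially step for step. Both arguments approximate $\mathcal{F}(\varphi),\mathcal{F}(\psi)$ by scaled copies, take a near-minimizing path, keep it among short curves via the uniform Lipschitz bound on $\ell_c^{3/2}$, decompose it with \Cref{lem: path-breakdown}, and combine the scaling identity \eqref{eq: diffgroup norm ineq} with the Poincar\'e estimate.

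The one minor and slightly cleaner difference is in Step 2. You compare all terms $i\ge2$ directly, whereas the paper uses only the $a_2$ and $a_n$ terms and then the interpolation \Cref{lemm: norm est}. Your route gives a constant independent of $a_3,\dots,a_n$, as anticipated in the remark after the lemma.

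One slip to fix: in the last display of Step 2, the middle expression should be $\bigl(1+\tfrac{4a_0+a_1}{a_2}\bigr)\sum_{i\ge2}a_i\|\partial_s^i\phi_t\|^2$. A multiple of the $a_2$ term alone cannot bound $G_\phi(\phi_t,\phi_t)$, since that includes the higher-order terms.
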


\begin{remark*}
    In fact, if $n>3$, the constant $\tilde C$ can be taken to be independent of $a_3,...,a_{n-1}$. This follows by  deriving direct inequalities, as \eqref{eq: gamma-nterm inverse}, for all intermediate terms.
\end{remark*}

\begin{proof}
    We set $r_0>0$ so that $Cr_0(1+r_0)(1+2r_0)e^{2Cr_0}<1$, 
    where $C$ is the constant from \Cref{lemm: ellc lipschitz}. Given $\varphi_0,\varphi_1\in\diff$ with 
    \[\dist\left(\mathcal{F}(\varphi_0),\mathcal{F}(\varphi_1)\right)<r_0,\] we fix $0<\varepsilon<r_0-\dist\left(\mathcal{F}(\varphi_0),\mathcal{F}(\varphi_1)\right)$. By definition, $\lim_{m\rightarrow\infty}\dist\left(\frac{1}{m}\varphi_0,\frac{1}{m}\varphi_1\right)<r_0$. 
    We fix $N\in\mathbb{N}$ so that both
    \begin{equation}
        \label{eq: r0 lip bound}
        \frac{(2\pi)^{3\slash2}}{N^{3\slash2}}+Cr_0(1+r_0)(1+2r_0)e^{2Cr_0}\left(1+\frac{{(2\pi)}^{3/2}}{N^{3\slash2}}\right)<1,
    \end{equation}
    and $\dist\left(\frac{1}{m}\varphi_0,\frac{1}{m}\varphi_1\right)<r_0$ for all $m\ge N$.
    
    We now consider, for $m\ge N$, the metric ball $B_{r_0}(\frac{1}{m}\varphi_0)$. 
    By \Cref{lemm: ellc lipschitz}, we have  
    \[
    \left|\frac{(2\pi)^{3/2}}{m^{3\slash2}}-\ell_c^{3\slash2}\right|<Cr_0(1+r_0)(1+2r_0)e^{2Cr_0}\left(1+\frac{{(2\pi)^{3/2}}}{m^{3\slash2}}\right)\quad \forall c\in B_{r_0}(\frac{1}{m}\varphi_0).
    \]
    This implies, by \eqref{eq: r0 lip bound}, that $c\mapsto\ell_c$ is bounded by $1$ on $B_{r_0}(\frac{1}{m}\varphi_0)$.
    Next, we fix a path, $\gamma^m:[0,1]\rightarrow\In$, connecting $\frac{1}{m}\varphi_0$ and $\frac{1}{m}\varphi_1$, so that 
    \[\dist\left(\frac{1}{m}\varphi_0,\frac{1}{m}\varphi_1\right)<\len\left(\gamma^m\right)<\dist\left(\frac{1}{m}\varphi_0,\frac{1}{m}\varphi_1\right)+\varepsilon<r_0.\]
    Thus, $\gamma^m$ is contained in $B_{r_0}\left(\frac{1}{m}\varphi_0\right)$, and so $\ell_{\gamma(t)}\le1$ for all $t\in[0,1]$.   
    
    Considering \Cref{lem: path-breakdown}, we write 
    \[\gamma^m(t)=r^m(t)+\ell^m(t)\varphi^m(t),\] 
    where $r^m(t)\in\R$, $\ell^m(t)>0$, $\varphi^m(t)\in\diff$ for all $t\in[0,1]$, and are all piecewise-$C^1$ with respect to  $t$. As established in \eqref{eq: diffgroup norm ineq}, we have
    \[\|\partial_s^i\gamma^m_t(t)\|^2_{L^2(ds)}=\frac{1}{\left(\ell^m(t)\right)^{2i-3}}\|\partial_s^i\varphi^m_t(t)\|^2_{L^2(ds)},\quad \forall i=2,3...,n,\]
    where in the right hand side,  $\varphi^m_t\in T_{\varphi^m(t)}\In$.
    Since $\ell^m(t)\le1$, we have
    \begin{equation}
        \label{eq: gamma-twoterm inverse}
        \begin{split}
            \varepsilon+\dist\left(\frac{1}{m }\varphi_0,\frac{1}{m}\varphi_1\right)\ge&\len\left(\gamma^m\right)\\
            \ge&\sqrt{a_2}\int_0^1\|\partial_s^2\gamma^m_t(t)\|_{L^2(ds)}dt\\
            \ge&\sqrt{a_2}\int_0^1\|\partial_s^2\varphi^m_t(t)\|_{L^2(ds)}.
        \end{split}
    \end{equation}
    Similarly,
    \begin{equation}
        \label{eq: gamma-nterm inverse}
        \begin{split}
            \varepsilon+\dist\left(\frac{1}{m }\varphi_0,\frac{1}{m}\varphi_1\right)\ge&\len\left(\gamma^m\right)\\
            \ge&\sqrt{a_n}\int_0^1\|\partial_s^n\gamma^m_t(t)\|_{L^2(ds)}dt\\
            \ge&\sqrt{a_n}\int_0^1\|\partial_s^n\varphi^m_t(t)\|_{L^2(ds)}dt.
        \end{split}
    \end{equation}
    Noting that $\varphi^m(t)\in\diff$ for all $t$, we have $\varphi^m_t(0,t)=\varphi^m_t(2\pi,t)\equiv0$, which by \Cref{lemm: poincare est} implies 
    \[\|\partial_s^2\varphi^m_t\|_{L^2(ds)}\ge\frac{1}{2}\|\varphi^m_t\|_{L^2(ds)}.\]
    Thus, from \eqref{eq: gamma-twoterm inverse} and \eqref{eq: gamma-nterm inverse}, we have 
    \begin{equation*}
        \begin{split}
            \dist\left(\frac{1}{m }\varphi_0,\frac{1}{m}\varphi_1\right)\ge& \frac{1}{2}\int_0^1\sqrt{\frac{a_2}{4}}\|\varphi^m_t\|_{L^2(ds)}+\sqrt{a_n}\|\partial_s^n\varphi^m_t(t)\|_{L^2(ds)}dt\\
            \ge&  C_0\int_0^1\sqrt{\|\varphi^m_t(t)\|^2_{L^2(ds)}+\|\partial_s^n\varphi^m_t(t)\|^2_{L^2(ds)}}dt,
         \end{split}
    \end{equation*}
    where $C_0=C_0(a_0,a_n)>0.$
    Recall also that, since $\ell_{\varphi^m(t)}=2\pi$ for all $m,k>M$ and $t\in[0,1]$, by \Cref{lemm: norm est}, for some  $C_1=C_1(a_0,...,a_n,n)>0$, we have 
    \[\sum_{i=1}^na_i\|\partial_s^i\varphi^m_t\|_{L^2(ds)}^2\le C_1\left(\|\varphi^m_t(t)\|^2_{L^2(ds)}+\|\partial_s^n\varphi^m_t(t)\|^2_{L^2(ds)}\right).\]

    Thus, considering $\varphi^m$ is a path $[0,1]\rightarrow\iota\left(\diff\right)\subset\In$ connecting $\iota(\varphi_0)\text{ and }\iota(\varphi_1)$, we have
     \begin{equation}
        \varepsilon+\dist\left(\frac{1}{m }\varphi_0,\frac{1}{m}\varphi_1\right)\ge\tilde{C}^{-1}\len(\varphi^m_t)\ge \tilde{C}^{-1}\dist\left(\iota(\varphi_0),\iota(\varphi_1)\right),
    \end{equation}
where $\tilde C=\frac{\sqrt C_1}{C_0}$. Taking $m\rightarrow\infty$ we conclude
\[\varepsilon+\dist\left(\mathcal{F}(\varphi_0),\mathcal{F}(\varphi_1)\right)\ge\tilde{C}^{-1}\dist\left(\iota(\varphi_0),\iota(\varphi_1)\right),\] and since $\varepsilon$ is arbitrary, the assertion follows .
\end{proof}

\subsection{Proof of main theorem}
\label{subsec: results}

Next, we proceed to prove \Cref{thm: main}. We prove the injectivity of $\mathcal{F}$ in \Cref{prop: distinct limit points}, continuity in \Cref{prop: H continuous},  and surjectivity in \Cref{prop: char of comp}. Subsequently, we conclude $\mathcal{F}^{-1}$  is Lipschitz continuous in \Cref{prop: H Lip continuous}, thus, in particular, establishing that $\mathcal{F}$ is a homeomorphism.

\begin{prop}
   \label{prop: distinct limit points}
    Let $n\ge2$, and let $G$ be a constant-coefficient Riemannian metric as in \eqref{eq: metric} on $\In$.  Then the map $\mathcal{F}$ defined in \eqref{eq: homeorphism def} is injective.
\end{prop}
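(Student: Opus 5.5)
Injectivity of $\mathcal{F}$ follows essentially directly from \Cref{lem: Constant dist inequality}. We argue by contraposition: suppose $\varphi,\psi\in\diff$ satisfy $\mathcal{F}(\varphi)=\mathcal{F}(\psi)$, and we show $\varphi=\psi$.

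If $\mathcal{F}(\varphi)=\mathcal{F}(\psi)$, then $\dist\left(\mathcal{F}(\varphi),\mathcal{F}(\psi)\right)=0<r_0$, where $r_0$ is the constant from \Cref{lem: Constant dist inequality}. That lemma then gives
\[
\dist\left(\iota(\varphi),\iota(\psi)\right)\le \tilde C\cdot 0=0 .
\]
Since $\iota(\varphi),\iota(\psi)$ are genuine points of $\In$, and the geodesic distance of the strong Riemannian metric $G$ induces the manifold topology (\Cref{lemm: topo equivalence}), it is in particular a genuine metric, i.e., non-degenerate. Hence $\iota(\varphi)=\iota(\psi)$, so $\varphi=\psi$ as elements of $H^n$, and therefore as diffeomorphisms.

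One subtle point remains. The paper restricts attention to the component of increasing functions. If $\varphi$ is orientation-preserving and $\psi$ is reversing, then the sequences $\tfrac1m\varphi$ and $\tfrac1m\psi$ lie in different connected components of $\In$, which are separated. The distance between them is bounded below by a positive constant independent of $m$: any path between them must leave $\In$, so in fact the distance is infinite. Thus their limits are distinct. Within a component, \Cref{lem: Constant dist inequality} applies as stated.

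The only real obstacle is therefore already handled by \Cref{lem: Constant dist inequality}. I would simply check that its hypothesis of strict inequality $<r_0$ is met by distance zero, and that non-degeneracy of $\dist$ on $\In$ is available through \Cref{lemm: topo equivalence}.
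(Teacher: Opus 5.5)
Your proof is correct and is essentially the paper's argument: both combine \Cref{lem: Constant dist inequality} with the non-degeneracy of $\dist$ on $\In$. You argue by contrapositive (distance $0<r_0$ forces $\dist(\iota(\varphi),\iota(\psi))=0$), while the paper argues directly that $\varphi\ne\psi$ forces either $\dist(\mathcal{F}(\varphi),\mathcal{F}(\psi))\ge r_0$ or $\dist(\mathcal{F}(\varphi),\mathcal{F}(\psi))\ge \tilde C^{-1}\dist(\iota(\varphi),\iota(\psi))>0$; your extra remark on the two orientation components is harmless but unnecessary, since the paper works within the increasing component throughout.
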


\Cref{prop: distinct limit points} is a direct consequence of \Cref{lem: Constant dist inequality}:

\begin{proof}
    If $\varphi\ne\psi\in\diff$, then 
    \[
    \dist\left(\iota(\varphi),\iota(\psi)\right)>0,
    \]
    since $\dist$ separates points.
    By \Cref{lem: Constant dist inequality}, either \[\dist\left(\mathcal{F}(\varphi),\mathcal{F}(\psi)\right)>r_0,\] 
    or 
    \[0<\dist\left(\iota(\varphi),\iota(\psi)\right)\le \tilde{C}\dist\left(\mathcal{F}(\varphi),\mathcal{F}(\psi)\right),\]
    concluding the proof.
\end{proof}

\begin{prop}
    \label{prop: H continuous}
    Let $n\ge2$, let $G$ be a constant coefficient Riemannian metric  on $\In$ as in \eqref{eq: metric}, possibly with $a_2=0$. Then the map $\mathcal{F}$ defined in $\eqref{eq: homeorphism def}$ is continuous.
\end{prop}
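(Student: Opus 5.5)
The plan is to show that $\mathcal{F}$ is Lipschitz on small balls, and in fact that $\dist(\mathcal{F}(\varphi),\mathcal{F}(\psi))\le C\,\dist(\iota(\varphi),\iota(\psi))$ for some constant $C$. By \Cref{lemm: topo equivalence} and the remark after \eqref{eq: identity map def}, $\iota$ is a homeomorphism onto its image. So if $\varphi_k\to\varphi$ in $\diff$, then $\dist(\iota(\varphi_k),\iota(\varphi))\to 0$, and such a bound gives continuity of $\mathcal{F}$.

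The key observation is a scaling estimate. Let $\gamma:[0,1]\to\In$ be any piecewise $C^1$ path from $\iota(\varphi)$ to $\iota(\psi)$, and consider the scaled path $\frac1m\gamma$. It connects $\frac1m\varphi$ to $\frac1m\psi$, and its tangent is $\frac1m\gamma_t$. Since $\partial_s$ is computed relative to the base curve $\frac1m\gamma(t)$, each term transforms as
\[
\left\|\partial_s^i\left(\tfrac1m h\right)\right\|^2_{L^2(ds_{\gamma/m})}=m^{2i-3}\|\partial_s^i h\|^2_{L^2(ds_\gamma)} .
\]
This is the same computation as \eqref{eq: diffgroup norm ineq}, now with the constant $\ell=1/m$. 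For $i\ge2$ the factor $m^{2i-3}$ grows with $m$, so scaling down does not simply shrink lengths. The higher-order terms blow up, and the naive bound fails. This is the main obstacle.

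To get around it, I would not compare along $\frac1m\gamma$ for a fixed $m$. Instead I would use the freedom in \Cref{cor: uni-para Cauchy}: the limit point does not depend on the choice of $\ell_m\to0$. Rather than scaling, I would build a path in $\overline{\In}$ from $\mathcal{F}(\varphi)$ to $\mathcal{F}(\psi)$ by concatenating three pieces:
\begin{enumerate}[(i)]
\item shrink-type paths from $\frac1m\varphi$ up to $\varphi$;
\item the path $\gamma$ itself, from $\varphi$ to $\psi$;
\item a shrink-type path from $\psi$ back down to $\frac1m\psi$.
\end{enumerate}
Pieces (i) and (iii) each have length at most $C\sqrt{2\pi}$ by \Cref{lem: shrink dist}, which is a fixed constant rather than something small. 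So this gives only boundedness, not continuity.

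The better route uses the geometry of the completion. Fix $\varphi$ and let $\varphi_k\to\varphi$ in $H^n$. For each fixed $m$, the map $\eta\mapsto\frac1m\eta$ is continuous from $H^n$ into $(\In,\dist)$, again by \Cref{lemm: topo equivalence}. Hence $\dist(\frac1m\varphi_k,\frac1m\varphi)\to0$ as $k\to\infty$ for each fixed $m$. The triangle inequality then gives
\[
\dist(\mathcal{F}(\varphi_k),\mathcal{F}(\varphi))\le \dist\left(\mathcal{F}(\varphi_k),\tfrac1m\varphi_k\right)+\dist\left(\tfrac1m\varphi_k,\tfrac1m\varphi\right)+\dist\left(\tfrac1m\varphi,\mathcal{F}(\varphi)\right).
\]
It therefore suffices to bound $\dist(\mathcal{F}(\eta),\frac1m\eta)$ uniformly in $\eta$, and to show this bound tends to $0$ as $m\to\infty$.

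For this uniform bound, I would use the proof of \Cref{cor: uni-para Cauchy}. For any $j\ge m$, the distance $\dist(\frac1m\eta,\frac1j\eta)$ is at most $\dist(\frac1m\eta,\lambda\frac1m\eta)$ with $\lambda=m/j$. By \Cref{lem: shrink dist} this is bounded by $C\sqrt{\ell_{\eta/m}}=C\sqrt{2\pi/m}$, where $C$ depends only on $a_0,a_1$ and not on $\eta$. Letting $j\to\infty$ gives
\[
\dist\left(\tfrac1m\eta,\mathcal{F}(\eta)\right)\le C\sqrt{2\pi/m}
\]
uniformly in $\eta\in\diff$.

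The argument is then finished as follows. Given $\varepsilon>0$, choose $m$ with $2C\sqrt{2\pi/m}<\varepsilon/2$. Then choose $k_0$ so that the middle term of the triangle inequality is below $\varepsilon/2$ for all $k\ge k_0$. This never uses $a_2>0$, which is consistent with the statement allowing $a_2=0$.
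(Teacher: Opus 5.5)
Your final argument (the ``better route'') is correct and is essentially the paper's proof. It runs a triangle inequality through $\frac1m\varphi_k$ and $\frac1m\varphi$, uses the bound $\dist(\frac1m\eta,\mathcal{F}(\eta))\le C\sqrt{2\pi/m}$ from \Cref{lem: shrink dist} (uniform in $\eta$, depending only on $a_0,a_1$), and gets fixed-scale convergence from \Cref{lemm: topo equivalence}; your direct $\varepsilon/2$ conclusion is slightly cleaner than the paper's subsequence extraction. The Lipschitz bound promised in your opening sentence is never established and is not needed, so drop it together with the abandoned three-piece construction.
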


\begin{remark*}
    Note that, by \Cref{lemm: topo equivalence}, the topology induced on $\diff$ by $G$, and the topology induced by $\|\cdot\|_{H^n(d\theta)}$ are identical. 
    Hence, the continuity of $\mathcal{F}$ is equivalent for both choices of metrics.
\end{remark*}

\begin{proof}
    To show continuity, assume $\left(\varphi_m\right)_{m=1}^\infty\subset\diff$  converges to $\varphi_\infty\in\diff$, i.e., $\|\varphi_n-\varphi_\infty\|_{H^n(d\theta)}\rightarrow0$.
    By homogeneity of $\|\cdot\|_{H^n}$,
    \[\|\lambda\varphi_n-\lambda\varphi_\infty\|_{H^n(d\theta)}\rightarrow0\qquad \text{for any }\lambda>0.\]
    Note that $\lambda\varphi_n,\lambda\varphi_\infty\in\In$ for all $\lambda>0$, and thus, by \Cref{lemm: topo equivalence}, 
    $\dist\left(\lambda\varphi_m,\lambda\varphi_\infty\right)\rightarrow0$ for all $\lambda>0$.

    Next, we show $\mathcal{F}(\varphi_m)\rightarrow\mathcal{F}(\varphi_\infty)$ with respect to the metric induced on $\overline\In$ by $\dist$. As in any metric space, it is enough to show that any subsequence has a further subsequence that converges to $\mathcal{F}(\varphi_\infty)$. 

    For simplicity of notation, assume $\mathcal{F}(\varphi_m)$ denotes a chosen subsequence. By \Cref{lem: shrink dist},  it follows that, for all $m\in \mathbb{N}\cup\{\infty\}$,  
    \[\dist\left(\mathcal{F}(\varphi_m),\lambda\varphi_m\right)\le C\sqrt{\lambda}\qquad \text{for any }\lambda\in(0,1),\]
    where $C=C(a_0,a_1)$ is independent of $m$.
    Set $m_1=1$, for each $k$ we choose $m_{k}>m_{k-1}$ large enough so that 
    \[\dist\left(\frac{1}{k^2}\varphi_l,\frac{1}{k^2}\varphi_\infty\right)<\frac{1}{k}\qquad \text{for all } l\ge m_k.\]
    Together, these assure 
        \begin{equation*}
            \begin{split}
                \dist\bigg(\mathcal{F}(\varphi_{m_k}),\mathcal{F}(\varphi_\infty)\bigg)&\le
                \dist\left(\mathcal{F}(\varphi_{m_k}),\frac{1}{k^2}\varphi_{m_k}\right)\\
                &+\dist\left(\frac{1}{k^2}\varphi_{m_k},\frac{1}{k^2}\varphi_\infty\right)
                +\dist\left(\frac{1}{k^2}\varphi_\infty,\mathcal{F}(\varphi_\infty)\right) \\
                &\le\frac{1+2C}{k},
            \end{split}
        \end{equation*}
where we used that $\dist\left(\frac{1}{k^2}\varphi_m,\frac{1}{k^2}\varphi_\infty\right)=\lim_{l\rightarrow\infty}\dist\left(\frac{1}{k^2}\varphi_m,\frac{1}{k^2}\varphi_l\right)
\le \frac{1}{k}$. 
Hence, $\mathcal{F}(\varphi_{m_k})\to \mathcal{F}(\varphi_\infty)$, which completes the proof.
\end{proof}

\begin{prop}
\label{prop: char of comp}
       Let $n\ge2$, and let $G$ be a constant-coefficient Riemannian metric as in \eqref{eq: metric} on $\In$.  Then the map $\mathcal{F}$ defined in \eqref{eq: homeorphism def} is surjective.
    
\end{prop}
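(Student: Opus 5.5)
Take a point $x\in\overline{\In}\setminus\In$ and a Cauchy sequence $c_m\in\In$ with $c_m\to x$. By \Cref{thm: non vanishing converge}, $\ell_{c_m}\to0$; otherwise a subsequence would converge inside $\In$ and $x$ would not be a new point. Using \eqref{eq: decomposition}, write $c_m=r_m+\ell_m\varphi_m$ with $\varphi_m\in\diff$ and $\ell_m\to0$. The plan is to show that $(\varphi_m)$ is Cauchy in $\dist$ restricted to $\iota(\diff)$, that its limit $\varphi_\infty$ lies in $\diff$, and that $\mathcal{F}(\varphi_\infty)=x$.

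\textbf{Step 1: $\dist(c_m,\mathcal{F}(\varphi_m))\to0$.} We have $\dist(c_m,\ell_m\varphi_m)\le|r_m|\sqrt{a_0\ell_m}$ by \Cref{lemma: translation dist}, but $r_m$ may be unbounded, so we argue as in the proof of \Cref{cor: uni-para Cauchy}. First shrink $c_m$ by a factor $\lambda$ so small that $\lambda|r_m|\le1$, at cost at most $C\sqrt{\ell_m}$ by \Cref{lem: shrink dist}. Then translate, at cost at most $\sqrt{a_0\lambda\ell_m}$. This gives $\dist(c_m,\lambda\ell_m\varphi_m)\lesssim\sqrt{\ell_m}$. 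Since $\lambda\ell_m\varphi_m$ is within $C\sqrt{\lambda\ell_m}$ of $\mathcal{F}(\varphi_m)$ (again \Cref{lem: shrink dist}, letting the shrinking go to zero), we get $\dist(c_m,\mathcal{F}(\varphi_m))\le C'\sqrt{\ell_m}\to0$. Consequently $\mathcal{F}(\varphi_m)\to x$, and $(\mathcal{F}(\varphi_m))$ is Cauchy in $\overline{\In}$.

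\textbf{Step 2: transfer to $\diff$.} For $m,k$ large, $\dist(\mathcal{F}(\varphi_m),\mathcal{F}(\varphi_k))<r_0$, so \Cref{lem: Constant dist inequality} gives $\dist(\iota\varphi_m,\iota\varphi_k)\le\tilde C\,\dist(\mathcal{F}(\varphi_m),\mathcal{F}(\varphi_k))$. Hence $(\iota\varphi_m)$ is Cauchy in $(\In,\dist)$. Every $\iota\varphi_m$ has length $2\pi$, so \Cref{thm: non vanishing converge} yields a limit $c_\infty\in\In$, with convergence also in $H^n$ by \Cref{lemm: topo equivalence}.

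\textbf{Step 3: the limit is a diffeomorphism.} $H^n$-convergence implies $C^1$-convergence. Therefore $c_\infty(0)=0$, $c_\infty(2\pi)=2\pi$, $c_\infty'\ge0$, and $c_\infty'>0$ everywhere because $c_\infty$ is an immersion. So $c_\infty=\varphi_\infty\in\diff$ and $\varphi_m\to\varphi_\infty$ in $H^n$.

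\textbf{Step 4: conclusion.} By \Cref{prop: H continuous}, $\mathcal{F}(\varphi_m)\to\mathcal{F}(\varphi_\infty)$. By Step 1, $\mathcal{F}(\varphi_m)\to x$. Hence $x=\mathcal{F}(\varphi_\infty)$, and $\mathcal{F}$ is surjective.

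The main obstacle is Step 1, specifically the uniform bound on $\dist(c_m,\mathcal{F}(\varphi_m))$ when the translations $r_m$ are unbounded. The shrink-then-translate trick handles this; the point to check is that the constant $C$ in \Cref{lem: shrink dist} does not depend on the diffeomorphism, which holds since it depends only on $a_0,a_1$ once $\ell\le2\pi$.
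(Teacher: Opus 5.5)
Your proof is correct and follows essentially the same route as the paper: pass to $\mathcal{F}(\varphi_m)\to x$, use \Cref{lem: Constant dist inequality} to make $(\iota\varphi_m)$ Cauchy, apply \Cref{thm: non vanishing converge} at fixed length $2\pi$, check that the limit is a diffeomorphism, and conclude by continuity of $\mathcal{F}$. Your Step 1 (shrink-then-translate, giving $\dist(c_m,\mathcal{F}(\varphi_m))\le C'\sqrt{\ell_{c_m}}$) just spells out the reduction to sequences of the form $\frac{1}{m}\varphi_m$, which the paper only asserts briefly by appeal to \Cref{cor: uni-para Cauchy}.
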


\begin{proof}
    Let $\eta\in\overline{\In}\setminus\In$. As noted in \Cref{cor: uni-para Cauchy}, we may assume it is the limit point of a sequence of the form $\frac{1}{m} \varphi_m$ for $m=1,2,...$, where $\varphi_m\in\diff$. 
    Our objective is to show $\eta=\mathcal{F}(\varphi_\infty)$ for some $\varphi_\infty\in\diff$.

    First, we note that \Cref{lem: shrink dist} implies $\dist\left(\mathcal{F}(\varphi_m),\lambda\varphi_m\right)\le C\sqrt{\lambda}$, for all $\lambda\le2\pi$ and $m\in\mathbb{N}$, where $C=C(a_0,a_1)$. 
    Thus, by triangle inequality, $\dist(\mathcal{F}(\varphi_m),\eta)\rightarrow0$. 
    In particular, we may choose $N$ large enough, so that $\dist\left(\mathcal{F}(\varphi_m),\mathcal{F}(\varphi_k)\right)<r_0$ for all $m,k>N$, where $r_0$ is the universal radius introduced in \Cref{lem: Constant dist inequality}.

    We then conclude, by \Cref{lem: Constant dist inequality}, that
     \begin{equation}
        \tilde{C}\dist\left(\mathcal{F}(\varphi_m),\mathcal{F}(\varphi_k)\right)\ge \dist\left(\iota(\varphi_m),\iota(\varphi_k)\right),
    \end{equation}
    where $\tilde C=\tilde C(a_0,...,a_n,n)$. 
    Since $\mathcal{F}\left(\varphi_m\right)$ is convergent by assumption, we conclude $\left(\iota(\varphi_m)\right)^\infty_{m=1}$ is Cauchy. 
    Note that $\ell_{\iota(\varphi_m)}\equiv2\pi$, and so, by \Cref{thm: non vanishing converge}, $\iota(\varphi_m)$ converges with respect to $\dist$ to some immersion $\varphi_\infty\in\In$. Furthermore, by \Cref{lemm: topo equivalence},  $\|\varphi_m-\varphi_\infty\|_{H^n}\rightarrow0$, and by  standard Sobolev embedding, $\|\varphi_m-\varphi_\infty\|_{C^1}\rightarrow0$ as well. 
    This assures $\varphi_\infty(0)=0,\varphi_\infty(2\pi)=2\pi$,  and together with the fact $\varphi_\infty$ is an immersion, we conclude $\varphi_\infty$ is a diffeomorphism.

    Having established that $\iota(\varphi_m)\rightarrow\varphi_\infty=\iota(\varphi_\infty)$ with respect to $\dist$, \Cref{prop: H continuous} assures also $\dist\left(\mathcal{F}(\iota_m),\mathcal{F}(\iota_\infty)\right)\rightarrow0$,
    and so $\eta=\mathcal{F}(\varphi_\infty)$, as desired.
\end{proof}

Finally, we show that $\mathcal{F}^{-1}$ is Lipschitz continuous:

\begin{prop}
    \label{prop: H Lip continuous}
     Let $n\ge2$, and let $G$ be a constant-coefficient Riemannian metric as in \eqref{eq: metric} on $\In$.  Then the map \[\mathcal{F}^{-1}:\left(\overline{\In}\setminus\In,\dist\right)\rightarrow\left(\iota\left(\diff\right),\dist\right)\] is Lipchitz continuous. 
\end{prop}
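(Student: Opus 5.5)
The natural starting point is \Cref{lem: Constant dist inequality}, which already gives
\[
\dist\left(\iota(\varphi),\iota(\psi)\right)\le \tilde C\,\dist\left(\mathcal{F}(\varphi),\mathcal{F}(\psi)\right)
\]
whenever $\dist\left(\mathcal{F}(\varphi),\mathcal{F}(\psi)\right)<r_0$. By \Cref{prop: char of comp} and \Cref{prop: distinct limit points}, $\mathcal{F}^{-1}$ is defined on all of $\overline{\In}\setminus\In$. So the plan is to split into a \emph{near} case and a \emph{far} case according to whether $D\coloneqq\dist\left(\mathcal{F}(\varphi),\mathcal{F}(\psi)\right)$ is below $r_0$. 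The near case is exactly the lemma. All the work is in upgrading this local estimate to a global one, with a constant that does not depend on $D$.

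For the far case I would use a chaining argument along almost-minimizing paths. Fix $\varepsilon>0$ and $m$ large, and take a path $\gamma^m$ from $\frac1m\varphi$ to $\frac1m\psi$ of length less than $D+\varepsilon$. Choose a subdivision $0=t_0<\dots<t_K=1$ so that each piece $\gamma^m|_{[t_{j-1},t_j]}$ has length less than $r_0/2$, with $K\lesssim (D+\varepsilon)/r_0+1$. By \Cref{lem: path-breakdown}, write $\gamma^m(t_j)=r_j+\ell_j\varphi_j$ with $\varphi_j\in\diff$. The idea is to replace each breakpoint $\gamma^m(t_j)$ by the boundary point $\mathcal{F}(\varphi_j)$. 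By \Cref{lemma: translation dist} and \Cref{lem: shrink dist}, the cost of this replacement is at most $C\sqrt{\ell_j}+\sqrt{a_0\ell_j}|r_j|$, after first shrinking toward scale $0$ and then translating at a tiny scale, exactly as in the proof of \Cref{cor: uni-para Cauchy}; the translation term can be made negligible this way. If these replacement errors are small compared with $r_0$, then consecutive points $\mathcal{F}(\varphi_{j-1}),\mathcal{F}(\varphi_j)$ are within $r_0$ of each other. Applying \Cref{lem: Constant dist inequality} to each consecutive pair and summing with the triangle inequality in $\left(\iota(\diff),\dist\right)$ would then give
\[
\dist\left(\iota(\varphi),\iota(\psi)\right)\le \tilde C\Big(D+\varepsilon+\textstyle\sum_j C\sqrt{\ell_j}\Big).
\]

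The main obstacle is controlling $\ell_j$ along the path. \Cref{lemm: ellc lipschitz} only bounds $\ell^{3/2}$ by a quantity growing with the length travelled, so the breakpoints may have non-small length, and the replacement error would then be of order $K\cdot O(1)\sim D/r_0$. That would still be acceptable: since $D\ge r_0$, an error of order $D/r_0$ is bounded by a multiple of $D$, and Lipschitz continuity survives with a worse constant. My first attempt is therefore to bound the replacement error per breakpoint by a uniform constant $C'$. To do this, I would exploit the fact that near a breakpoint with larger length, the lemma's argument can be rerun on the rescaled path. Concretely, I would redo the computation of \eqref{eq: gamma-twoterm inverse} using the identity \eqref{eq: diffgroup norm ineq} on a subpath where $\ell\le L$ with $L$ fixed. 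This gives a length bound with constant $L^{(2i-3)/2}$ in place of $1$, which replaces the need to pass through boundary points at all.

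In fact, the cleaner route may be to avoid breakpoints entirely. I would directly show that $\ell^m(t)$ stays bounded by a universal constant along $\gamma^m$. If that fails, I would instead show that the portions of $\gamma^m$ where $\ell^m(t)>1$ cost length at least a fixed amount, so that their contribution to $\len(\varphi^m)$ is bounded by a multiple of their contribution to $\len(\gamma^m)$. This would give $\len(\varphi^m)\le \tilde C'\len(\gamma^m)$ globally, and then letting $m\to\infty$ and $\varepsilon\to0$, as at the end of the proof of \Cref{lem: Constant dist inequality}, would conclude the Lipschitz bound.
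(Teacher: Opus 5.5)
Your near case ($D<r_0$) is exactly \Cref{lem: Constant dist inequality}, as in the paper. The gap is the far case $D\ge r_0$, which is never actually proved. Every route you sketch needs a uniform bound on $\ell^m(t)$ along the near-minimizing paths $\gamma^m$, or equivalently on the breakpoint replacement errors. You do not establish such a bound, and the tools at hand do not give one.
\begin{itemize}
\item \Cref{lemm: ellc lipschitz} on $B_{D+\varepsilon}(\frac{1}{m}\varphi)$ only bounds $\ell_c^{3/2}$ by a quantity growing exponentially in $D$.
\item For large $\ell_c$ the replacement cost from \Cref{lem: shrink dist} grows like $\ell_c^{3/2}$, not $\sqrt{\ell_c}$. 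So the per-breakpoint errors are not $O(1)$, and summing $K\sim D/r_0$ of them gives no bound linear in $D$.
\item Once these errors are not small compared with $r_0$, the points $\mathcal{F}(\varphi_{j-1}),\mathcal{F}(\varphi_j)$ need not be $r_0$-close. Then \Cref{lem: Constant dist inequality} cannot be applied to consecutive pairs, and you would need the far-case estimate itself, which is circular.
\item Rerunning \eqref{eq: gamma-twoterm inverse} under a cap $\ell^m\le L$ only works if $L$ bounds $\ell^m$ on the whole path. Then $L$, and hence the constant, depends on $D$.
\item The excursion idea fails for the same reason. By \eqref{eq: diffgroup norm ineq}, $\|\partial_s^i\varphi^m_t\|_{L^2(ds)}=(\ell^m)^{(2i-3)/2}\|\partial_s^i\gamma^m_t\|_{L^2(ds)}$. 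Where $\ell^m$ is large, $\varphi^m$ can be longer than $\gamma^m$ by an unbounded factor. Knowing that each excursion costs a fixed amount of $\gamma^m$-length bounds their number, not this ratio.
\end{itemize}

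The missing idea is that no chaining is needed. In the far case $D\ge r_0$, a bounded additive error is harmless. You can pay it once at each endpoint, where the curve length is fixed. Since $\ell_{\iota(\varphi)}=2\pi$, \Cref{lem: shrink dist} gives $\dist(\iota(\varphi),\frac{1}{m}\varphi)\le C\sqrt{2\pi}$ for every $m$. Hence $\dist(\iota(\varphi),\mathcal{F}(\varphi))\le C\sqrt{2\pi}$, and likewise for $\psi$. The triangle inequality through $\mathcal{F}(\varphi)$ and $\mathcal{F}(\psi)$ then gives
\[
\dist(\iota(\varphi),\iota(\psi))\le D+2C\sqrt{2\pi}\le\Big(1+\frac{2C\sqrt{2\pi}}{r_0}\Big)D .
\]
So $\mathcal{F}^{-1}$ is Lipschitz with constant $\max\big(\tilde C,\,1+2C\sqrt{2\pi}/r_0\big)$, which is the paper's argument. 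You already had the key observation that an $O(1)$ error is absorbed into a multiple of $D$ when $D\ge r_0$. You applied it at interior breakpoints with uncontrolled lengths instead of at the two endpoints, where the lengths are fixed at $2\pi$.
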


\begin{proof}
    Let $\varphi,\psi\in\diff$. By \Cref{lem: Constant dist inequality}, if $\dist\left(\mathcal{F}(\varphi),\mathcal{F}(\psi)\right)<r_0$, then 
    \[\dist\left(\iota(\varphi),\iota(\psi)\right)\le \tilde{C}\dist\left(\mathcal{F}(\varphi),\mathcal{F}(\psi)\right).\]
 
    On the other hand, if $\dist\left(\mathcal{F}(\varphi),\mathcal{F}(\psi)\right)\ge r_0$. We notice that 
    \begin{equation}
        \begin{split}
        \dist\left(\iota(\varphi),\iota(\psi)\right)&\le \dist\left(\iota(\varphi),\mathcal{F}\left(\varphi\right)\right)\\
        &\quad +\dist\left(\mathcal{F}(\varphi),\mathcal{F}(\psi)\right)+\dist\left(\iota(\psi),\mathcal{F}\left(\psi\right)\right).
        \end{split}
    \end{equation}
By \Cref{lem: shrink dist}, 
$\dist\left(\iota(\psi),\mathcal{F}(\psi)\right),\dist\left(\iota(\varphi),\mathcal{F}(\varphi)\right)\le C$, for $C=C(a_0,a_1)$.
Thus,
\[\dist\left(\iota(\varphi),\iota(\psi)\right)\le\left(\frac{2C}{r_0}+1\right)\dist\left(\mathcal{F}(\varphi),\mathcal{F}(\psi)\right),\]
where we used the fact $\dist\left(\mathcal{F}(\varphi),\mathcal{F}(\psi)\right)\ge r_0$. 
Taking $L\coloneqq\max\left(\left(\frac{2C}{r_0}+1\right),\tilde{C}\right)$ concludes the proof.
\end{proof}

\bibliographystyle{abbrv} 
\bibliography{thesis} 

\end{document}